\documentclass[twocolumn]{autart}
\usepackage{etoolbox}
\apptocmd{\sloppy}{\hbadness 10000\relax}{}{}
\usepackage{graphicx}
\usepackage{mathtools}
\usepackage{amssymb}
\usepackage{amsfonts}
\usepackage{enumitem}
\usepackage{natbib}
\usepackage[hypertexnames=false, colorlinks=true, urlcolor=blue, linkcolor=blue,  citecolor=blue]{hyperref}

\begin{document}

\begin{frontmatter}
	\title{pqSEDMD: Subspace Methods for Extended Dynamic Mode Decomposition Identification}
	\thanks[footnoteinfo]{This paper was not presented at any IFAC
		meeting. Corresponding author C.~Garcia-Tenorio.}

	\author[EPEU]{Camilo Garcia-Tenorio}\ead{camilo.garciatenorio@umosn.ac.be},    
	\author[SECO]{Alan Vande Wouwer}\ead{alain.vandewouwer@umons.ac.be}               
	\address[EPEU]{Electrical Power Engineering Unit, Universit\'e de Mons, 31 Bd Dolez, 7000 Mons, Belgium}  
	\address[SECO]{Systems, Estimation, Control and Optimization, Universit\'e de Mons, 31 Bd Dolez, 7000 Mons, Belgium}  

	\begin{keyword}
		Nonlinear system identification; Extended dynamic mode decomposition; Subspace identification
	\end{keyword}

	\begin{abstract}
		The dynamic mode decomposition (DMD), along with its variant for nonlinear systems, the extended DMD (EDMD), are powerful tools for the extraction of meaningful spatio-temporal characteristics of (non)linear dynamical systems from measurement data. Despite some efforts to handle the identification task when dealing with real-world data, the decomposition based methods face a critical challenge: real-world data has two inherent sources of uncertainty, the process and measurement noise. Subspace identification methods, are robust tools able to provide accurate state-space models for multi-variable linear systems directly from input-output data. Combining these two methods, we introduce the p-q quasi-norm Subspace Extended Dynamic Mode Decomposition (pqSEDMD). An algorithm that uses our previous improvements to the EDMD by the use of a p-q-quasi-norm reduction on an orthogonal polynomial basis, the pqEDMD algorithm, along with subspace identification methods. The result is a robust approximation of nonlinear systems in a linear function space, combining the strengths of the two methodologies. Throughout the paper we will use the Duffing oscillator as a benchmark problem to show the effectiveness of the algorithm and illustrate many important aspects related to the development.
	\end{abstract}

\end{frontmatter}

\section{Introduction} 
\label{sec:introduction}
The Dynamic Mode Decomposition (DMD), introduced by~\citet{Schmid2010}, provides a data-driven framework for the analysis and prediction of linear dynamical systems and even some types of nonlinear flows~\citep{Rowley2009}. Despite its limitations to capture nonlinear behavior, it has found widespread applications for analysis~\citep{H_Tu_2014, Towne_2018, Bohon_2020} and control~\citep{Proctor2016}. To directly address the linear limitation of DMD~\citet{Williams2015}, introduced the extended DMD (EDMD), an approximation of nonlinear system dynamics in a linear function space of observables, the so-called, Koopman operator~\citep{Koopman1931}. Korda and Mezi\'c provide the convergence guarantees of the EDMD to the Koopman operator at the limit of data and observables~\citep{Korda2018}. This convergence led to the proliferation of EDMD as the data-driven method to approximate the theoretically robust Koopman operator for the analysis of nonlinear dynamics. For example, \citet{Mauroy_2020a} provide an overview of the research at the intersection of the Koopman operator theory and control theory.

The Koopman framework has since attracted substantial research interest: \citet{Klus2018} extended the framework to stochastic systems; and \citet{Baddoo2023} and \citet{Otto2021} explored connections to physics-informed and manifold learning. The SINDy framework by~\citet{Brunton2016sindy} has similarly demonstrated the potential of dictionary-based methods for discovering governing equations from data. For the subject of non-autonomous system identification, \citet{Haseli_2026} provide a universal form showing the equivalence between considering the operator dynamics as a linear parameter varying system parameterized by the input~\citep{Williams_2016}, and as a bilinear form~\citep{Proctor2016, Korda2018a}. Despite recent efforts to approximate nonlinear dynamics and the Koopman operator for uncertain systems using maximum likelihood methods~\citep{Garcia-Tenorio2022}, the EDMD algorithm still faces numerical challenges that limit its implementation in real-world systems. Mainly, when there is uncertainty in the system dynamics and measurement noise.

Subspace identification methods~\citep{overschee96,verhaegen_verdult_2007} provide a robust framework for the identification of uncertain, linear, and time-invariant systems. These methods construct low-dimensional subspaces capturing dominant system modes through numerically stable algorithms based on singular value decomposition (SVD) and QR factorization. Their success in linear system identification raises a interesting question:
	\textit{Is it possible to combine the robustness of subspace methods with the EDMD's ability to capture nonlinear dynamics in a lifted function space?}

\citet{Persis2020} formulate a similar question when dealing with data-driven control methods. Also, recent work by~\citet{Shang_2024} use the~\citet{Willems04} fundamental lemma to propose guarantees on the available data for identification under the assumption of the Koopman operator. Finally, \citet{van_Waarde_2020} also provide an interesting extension to the linear data-driven formulation when there are multiple data-trajectories available for the approximation of the subspace dynamics. This paper addresses the former question by introducing the p-q quasi-norm Subspace Extended Dynamic Mode Decomposition (pqSEDMD). This method explores the use of subspace methods to identify nonlinear dynamics using the EDMD algorithm with a p-q-quasi norm reduction of an orthogonal polynomial basis, referred to as pqEDMD~\citep{Garcia-Tenorio2022a}. The benefits of combining the two methods are: increasing the set of nonlinear dynamical system that can be identified just with data, and increasing the numerical robustness of EDMD methods and their approximations of the Koopman operator\footnote{The Matlab and Python implementations are publicly available in GitHub at \href{https://github.com/garten-cam/pqEDMDm}{pqEDMDm} and \href{https://github.com/garten-cam/pqEDMDp}{pqEDMDp} respectively. For the two packages, the class that implemets the pqSEDMD functionality is the ``sidDecomposition''.}.

\subsection{Motivating Example: The Duffing Oscillator}
\label{sub:Motivating Example: The Duffing Oscillator} 
The Duffing equation is the canonical benchmark to test the accuracy of algorithms and methods related to the EDMD algorithm and the Koopman operator. It is a nonlinear second order differential equation that can describe many types of dynamic behavior in nonlinear systems~\citep{Salas2021}. Depending on the parametrization, it's differential equation can describe a chaotic system, limit cycles or nonlinear mass-spring-damper systems:
\begin{equation}
	\ddot{x}+\delta\dot{x}+\alpha x + \beta x^3 = \gamma\cos(\omega t).
	\label{eq:DuffingEquation}
\end{equation}
Figure~\ref{fig:motivation} shows the performance of the pqSEDMD algorithm on the Duffing oscillator~\ref{eq:DuffingEquation} as a hardening spring system corrupted by noise.  The example uses a set of four trajectories to perform the identification, and two trajectories for testing the accuracy of the algorithm. Despite the challenges: non-linearity, non-uniqueness of the equilibrium, and measurement noise, the algorithm accurately reconstructs the system dynamics. Revealing the key advantages of out method: Robust identification of nonlinear dynamics in a linear function space for long term prediction and analysis.
\begin{figure}[ht]
	\begin{center}
		\includegraphics[width=0.95\linewidth]{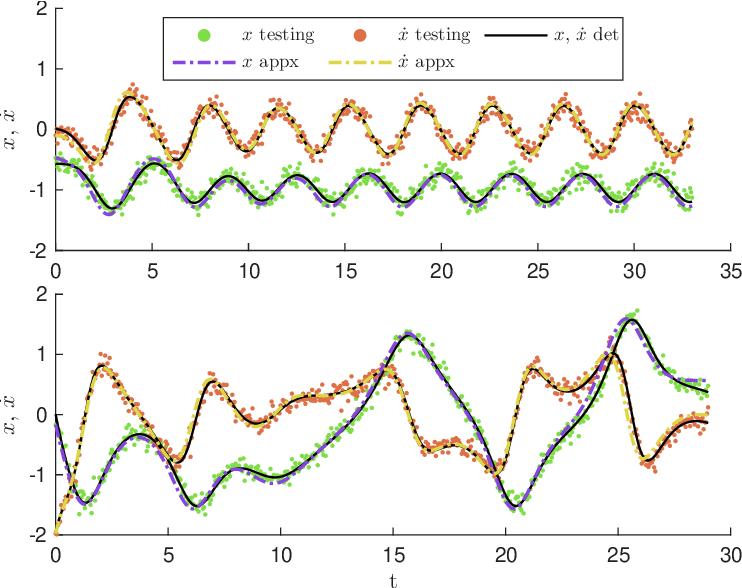}
	\end{center}
	\caption{Performance of the pqSEDMD algorithm on two testing trajectories of the Duffing oscillator where $\delta=0.5$, $\alpha=-1$, $\beta=1$, $\gamma\sim\texttt{U}(0,\,1)$, $\omega\sim\texttt{U}(-2,\,2)$, and $v\sim\texttt{N}(0,\,0.1)$.}
	\label{fig:motivation}
\end{figure}
The remaining of this paper explains how these results are possible. Section~\ref{sec:Data-driven Methods} introduces the necessary prerequisites, these are: the subspace identification methods in~\ref{sub:Subspace_Identification}, the pqEDMD algorithm in~\ref{sub:pqEDMD Identification}, and the concepts from data driven control that support our claims. Section~\ref{sec:The pqSEDMD} has our main result: the subspace identification algorithm in the context of the function space of the pqEDMD along with some numerical results. And finally, section~\ref{sec:Conclusions} presents some conclusions.
%
\section{Data-driven Methods}\label{sec:Data-driven Methods} 
Consider an input affine nonlinear dynamical system in discrete time
\begin{subequations}
	\label{eq:nonlin_affine_system}
	\begin{align}
		x(k+1) & = f(x(k)) + g(x(k)) u(k) + w(k) \label{eq:nonlin_affine_dyn} \\
		y(k)   & = h(x(k)) + v(k), \label{eq:nonlin_affine_out}
	\end{align}
\end{subequations}
where $x \in \mathbb{R}^n$ is the state, $u \in \mathbb{R}^m$ is the input, and $y \in \mathbb{R}^l$ is the output (i.e., the available measurements of the system). The system evolves according to the nonlinear state transition mapping $f\colon\mathbb{R}^n\rightarrow\mathbb{R}^n$, an input-to-state transition mappig $g\colon\mathbb{R}^m\rightarrow\mathbb{R}^n$, and the process noise $w(k) \sim \mathcal{N}(0, Q)$, a zero-mean white noise sequence with covariance matrix $Q \in \mathbb{R}^{n \times n}$. The output depends on the state via the state-to-output mapping $h\colon\mathbb{R}^n\rightarrow\mathbb{R}^{l}$, and is corrupted by the measurement noise $v(k) \sim \mathcal{N}(0, R)$, a zero-mean white noise sequence with covariance $R \in \mathbb{R}^{l \times l}$. The expectation of the process and output noise is,
\[
	\mathbb{E}\left[
		\begin{pmatrix}
			w(i) \\v(i)
		\end{pmatrix}
		\begin{pmatrix}
			w^{T}(j) & v^{T}(j)
		\end{pmatrix}\right] =
	\begin{pmatrix}
		Q & S \\ S^T & R
	\end{pmatrix} \delta_{ij} \geq 0.
\]
The solution of~\eqref{eq:nonlin_affine_system} produces the sequences of states $x=\{x_k\}_{k=0}^{T-1}\in\mathbb{R}^{n\times T}$ and outputs $y=\{y_k\}_{k=0}^{T-1}\in\mathbb{R}^{l\times T}$ that come from the successive application of the state transition mapping from an initial condition $x(0)$ and the application of a known sequence of inputs $u=\{u_k\}_{k=0}^{T-1}\in\mathbb{R}^{m\times T}$ according to the input and output functions. Then, from a trajectory of length $T$, define the data matrices
\begin{subequations}
	\label{eq:dataMat}
	\begin{align}
		Y & =
		\begin{bmatrix}y(0) & \ldots & y(T-1)
		\end{bmatrix}\label{eq:dataY} \\
		U & =
		\begin{bmatrix}u(0) & \ldots & u(T-1)
		\end{bmatrix}.\label{eq:dataU}
	\end{align}
\end{subequations}
For the descriptions of the subspace methods, we assume that there is a trajectory of the system of sufficient length and information purpose of identification. Conversely, for the pqEDMD, and the pqSEDMD algorithms, we assume that there are many of these trajectories available for \textit{training} the algorithms, and some others for \textit{testing} their accuracy.

\subsection{\texorpdfstring{Subspace Identification}{Subspace Identification}}\label{sub:Subspace_Identification} 
The objective of a subspace identification algorithm is to find a linear approximation of a controllable and observable dynamical system from input and output data~\eqref{eq:dataMat}. These algorithms estimate linear state space models, i.e., linearized versions of~\eqref{eq:nonlin_affine_system} that take the form:
\begin{subequations}
	\label{eq:lin_sys}
	\begin{align}
		x(k+1) & = Ax(k) + Bu(k) + w(k) \label{eq:discrete_system1} \\
		y(k)   & = Cx(k) + v(k) \label{eq:discrete_system2}
	\end{align}
\end{subequations}
where $A\in\mathbb{R}^{n\times n}$ is the state transition matrix that governs the evolution of the unknown state, $B\in\mathbb{R}^{m\times n}$ is the input-to-state transition matrix, and $C\in\mathbb{R}^{n\times l}$ is the output matrix. The linear approximation of the dynamics assumes that the input-output data lies in the vicinity of a hyperbolic fixed point of the nonlinear system~\eqref{eq:nonlin_affine_system} that satisfies the Hartman-Grobman theorem~\citep{giesl2015review}. The approximation further assumes that the sequence of inputs is persistently exiting~\citep{Willems04}.

Ignoring the uncertainties of the system, i.e., assuming the deterministic case, where $w(k)=0$ and $v(k)=0$ in~\eqref{eq:lin_sys}. The solution, or an orbit of the system is the successive application of the state transition matrix to the state, along with the application of the sequence of forcing signals, starting from a specific initial condition $x(0)=x_0$,
\begin{equation}
	x(k)=A^{k} x(0) + \displaystyle\sum_{i=0}^{k-1}A^{k-i-1}Bu(i).
	\label{eq:DS_solution}
\end{equation}
Following the (standard) notation of \citet{overschee96}, consider a set input-output samples $\{u(t),\, y(t)\}_{t=0}^{T-1}$ of length $T$. Define the input and output block Hankel matrices, $U_{0\vert2i-1} \in\mathbb{R}^{2mi\times j}$ and $Y_{0\vert2i-1} \in\mathbb{R}^{2li\times j}$, where the subscripts of the Hankel matrices indicate the initial time of the signal and the number of blocks, i.e., the number of stored samples of a trajectory in each column of the matrix, for example, the input Hankel matrix is,
{\normalsize
		\begin{align}
			 & U_{0\vert2i-1}  \triangleq \nonumber \\
			 &
			\begin{bmatrix}
				u(0)    & u(1)   & u(2)    & \cdots & u(j-1)    \\
				u(1)    & u(2)   & u(3)    & \cdots & u(j)      \\
				\vdots  & \vdots & \vdots  & \ddots & \vdots    \\
				u(i-1)  & u(i)   & u(i+1)  & \cdots & u(i+j-2)  \\
				\hdotsfor{5}                                    \\
				u(i)    & u(i+1) & u(i+2)  & \cdots & u(i+j-1)  \\
				u(i+1)  & u(i+2) & u(i+3)  & \cdots & u(i+j)    \\
				\vdots  & \vdots & \vdots  & \ddots & \vdots    \\
				u(2i-1) & u(2i)  & u(2i+1) & \cdots & u(2i+j-2)
			\end{bmatrix}\nonumber
		\end{align}}
\begin{equation}
	=
	\begin{bmatrix}
		U_{0|i-1} \\\hdotsfor{1}\\U_{i|2i-1}
	\end{bmatrix} =                      %
	\begin{bmatrix}
		U_{p} \\\hdotsfor{1}\\U_{f}
	\end{bmatrix},
	\label{eq:input_Hankel}
\end{equation}
where $U_p$ denotes the Hankel matrix of \textit{past} values and $U_f$ denotes the Hankel matrix of \textit{future} values, where each matrix has $i$ Hankel blocks. This division is related to the use of instrumental variables that will handle the uncertainties in the approximation.

The output Hankel matrix has the same structure as the input matrix~\eqref{eq:input_Hankel} along with the same division of \textit{past} and \textit{future} matrices with $i$ blocks each.
Furthermore, consider the shifted matrices with the $\pm$ superscript, these matrices include or exclude a block from the respective \textit{past} and \textit{future} matrices, i.e., $U_p^+=U_{0|i}$, $Y_p^+=Y_{0|i}$ (includes an additional block), and $U_f^-=U_{i+1|2i-1}$ and $Y_f^-=Y_{i+1|2i-1}$ (excludes the first block). For either case, the number of columns is the same. Given $T$ samples, the number of columns is $j=T - 2i + 1$. Implying that for a particular selection of Hankel blocks $i$, the number of samples $T>2i + j - 1$.

Considering the solution of the system~\eqref{eq:DS_solution}, and the output equation~\eqref{eq:discrete_system2}, the input-output relationship of the Hankel matrices is the following data equation,
\begin{equation}
	Y_{0|2i-1}  = \Gamma_{2i}X_{0\colon T-2i}+ H_{2i} U_{0|2i-1},
	\label{eq:system_obs_mark}
\end{equation}
where $X_{0\colon T-2i}$ is an unknown sequence of states, $\Gamma_{2i}\in\mathbb{R}^{2il\times n}$ is an extended observability matrix,
\begin{equation}
	\Gamma_{2i}=
	\begin{bmatrix}
		C & CA & \cdots & CA^{2i-1}
	\end{bmatrix}^{T}
	\label{eq:Extended_observability_mat}
\end{equation}
and $H_{2i}\in\mathbb{R}^{2il\times 2im}$ is a lower block triangular Toeplitz matrix, often-called, the Markov parameters of the system,
\begin{equation}
	H_{2i}=
	\begin{bmatrix}
		D          & 0          & \cdots & 0      & 0      \\
		CB         & D          & \cdots & 0      & 0      \\
		CAB        & CB         & \cdots & 0      & 0      \\
		\vdots     & \vdots     & \ddots & \vdots & \vdots \\
		CA^{2i-2}B & CA^{2i-3}B & \cdots & CB     & D
	\end{bmatrix}.
	\label{eq:markov_parameter}
\end{equation}
Starting from~\eqref{eq:system_obs_mark} and using the \textit{past} and \textit{future} division of the Hankel matrices, the identification problem becomes,
\begin{subequations}
	\label{eq:input_output_matrix}
	\begin{align}
		Y_p & = \Gamma_i X_p + H_i U_p \label{eq:input_output_matrix_p}  \\
		Y_f & = \Gamma_i X_f + H_i U_f \label{eq:input_output_matrix_f},
	\end{align}
\end{subequations}
with the appropriate dimension of the observability matrix and Markov parameters, and the division of the state sequence,
\begin{subequations}
	\label{eq:state_sequence_pf}
	\begin{align}
		X_p & =                                     %
		\begin{bmatrix}
			x(0) & x(1) & \cdots & x(T-2i)
		\end{bmatrix}\label{eq:state_sequence_past} \\
		X_f & =                                     %
		\begin{bmatrix}
			x(i) & x(i + 1) & \cdots & x(T-i)
		\end{bmatrix},\label{eq:state_sequence_future}
	\end{align}
\end{subequations}
both~\eqref{eq:input_output_matrix_p} and~\eqref{eq:input_output_matrix_f} admit the compact matrix representation,
\begin{equation}
	\begin{bmatrix}
		Y_{\{p,f\}} \\ U_{\{p,f\}}
	\end{bmatrix} =%
	\begin{bmatrix}
		\Gamma_i & H_i \\ 0 & I_{im}
	\end{bmatrix}%
	\begin{bmatrix}
		X_{\{p,f\}} \\ U_{\{p,f\}}
	\end{bmatrix},
	\label{eq:io_matrix_form}
\end{equation}
where $I_{im}$ is an identity matrix. The reason to have a distinction between \textit{past} and \textit{future}, is to deal with the uncertainties in the system: the measurement and the process noise. To this end, the instrumental variables are,
\begin{equation}
	W_{0|i-1} \triangleq
	\begin{bmatrix}
		U_{0|i-1} \\Y_{0|i-1}
	\end{bmatrix} =
	\begin{bmatrix}
		U_p \\Y_p
	\end{bmatrix} = W_p,
	\label{eq:instrumental_variables}
\end{equation}
and likewise $W_p^{+}$ denotes the same grouping of past inputs and outputs, shifted forward one time instant.

The methods we are using regarding subspace identification rely on the ability to estimate a sequence of states from the input-output data, and subsequently, estimate the system matrices.

Starting with the deterministic case, consider equation~\eqref{eq:system_obs_mark}, post-multiplied by a matrix $\Pi_{U^{\bot}}$ that satisfies $U_{0|2i-1} \Pi_{U^{\bot}}=0$. The result is,
\begin{equation}
	Y_{0|2i-1}\Pi_{U^\bot} = \Gamma_{2i} X_{0\colon T-2i}\Pi_{U^\bot},
	\label{eq:column_space}
\end{equation}
where,
\begin{equation}
	\Pi_{U^\bot} = I - U_{0|2i-1}^T(U_{0|2i-1}U_{0|2i-1}^T)^{-1}U_{0|2i-1},
	\label{eq:pi_u_orthogonal}
\end{equation}
and is equivalent to: having a geometric operator that projects the row space of the Hankel matrix of outputs onto the orthogonal complement of the row space of the Hankel matrix of inputs. For the deterministic case, define $\mathcal{O}_{2i}$ as the projection:
\begin{equation}
	\mathcal{O}_{2i} = Y_{0|2i-1}\Pi_{U^\bot} =Y_{0|2i-1}\Big/\mathbf{U}_{0|2i-1}^\bot,
	\label{eq:Oi_mat}
\end{equation}
where the $A\big/ \mathbf{B}^{\bot}$ operator denotes the orthogonal projection of $A$ onto the orthogonal complement of $B$.

For the forced-stochastic case, the method describes a similar calculation that uses the instrumental variables of the system~\citep{overschee96,verhaegen_verdult_2007}. Then, $\mathcal{O}_i$ is the oblique projection of the future outputs onto the row space of the instrumental variables, along the row space of future inputs:
\begin{equation}
	\mathcal{O}_i =  Y_f\Big/_{U_f}\mathbf{W}_p,
	\label{eq:Oi_stochastic}
\end{equation}
where the $A\big/_{C}\mathbf{B}$ operator denotes the oblique projection of $A$ along the row space of $C$, onto the row space of $B$.

The last choice before proceeding with the estimation of the state sequence and the system matrix approximation is the selection of subspace algorithm. There are different subspace algorithms that use different weighting matrices for $\mathcal{O}_i$. These matrices are: the full rank matrix $W_1\in\mathbb{R}^{li\times li}$ and $W_2\in\mathbb{R}^{j\times j}$ that satisfies $\texttt{rank}(W_p)=\texttt{rank}(W_pW_2)$~\citep{overschee96}. For example, the \textit{multivariable output error state space} (MOESP) algorithm uses $W_1=I_{li}$ and $W_2=\Pi_{U_f^{\bot}}$. Notice, that similarly to~\eqref{eq:Oi_mat} post multiplying $O_i$ by $\Pi_{U_f^{\bot}}$ is equivalent to having a geometric operator that projects the row space of $O_i$ onto the orthogonal complement of the row space of the Hankel matrix of future inputs,
\begin{equation}
	W_1O_iW_2 = I_{li}O_i\Pi_{U_f^{\bot}} = O_i\Big/\mathbf{U}_f^\bot.
	\label{eq:WOiW_projection}
\end{equation}
With the definition of $W_1\mathcal{O}_iW_2$, the approximation of the extended observability matrix comes from the singular value decomposition,
\begin{align}
	W_1\mathcal{O}_iW_2 & =                                                            %
	\begin{bmatrix}
		\mathcal{U}_1 & \mathcal{U}_2
	\end{bmatrix}%
	\begin{bmatrix}
		\Sigma_1 & 0 \\
		0        & 0
	\end{bmatrix}%
	\begin{bmatrix}
		\mathcal{V}_1^T \\ \mathcal{V}_2^T
	\end{bmatrix}\nonumber                                                  \\
	                    & = \mathcal{U}_1\Sigma_1\mathcal{V}_1^T,\label{eq:linear_svd}
\end{align}
where the $1$ subscript denotes the truncation of the matrices according to the nonzero singular values. Then, an approximation of the extended observability matrix is
\begin{equation}
	\hat{\Gamma}_i = W_1^{-1}\mathcal{U}_1\Sigma_1^{1/2},
	\label{eq:Ext_obs_mat}
\end{equation}
and a sequence of states that explains the dynamics of the system is,
\begin{equation}
	\hat{X}_{i} = \Gamma_i^\dagger\mathcal{O}_i,
	\label{eq:state_seq}
\end{equation}
where $^{\dagger}$ denotes the Moore-Penrose pseudo inverse. Note that these approximations of the extended observability matrix and the approximation of the state sequence are not equivalent to their definitions in~\eqref{eq:Extended_observability_mat} and~\eqref{eq:state_sequence_pf}, i.e., $\hat{\Gamma}_i\ne\Gamma_i$ and $X_i\ne \hat{X}_i$.
These relationships become equalities under a similarity transformation. Since the objective is to obtain an extended observability matrix consistent with some sequence of states, it is not necessary to derive the similarity transformation explicitly.

The final element for the approximation is a sequence of states shifted one time step ahead of $\hat{X}_i$. Define the shifted set of future states as $\hat{X}_{i+1}$, and  $\mathcal{O}_{i-1}$ from the oblique projection
\begin{equation}
	\mathcal{O}_{i-1} = Y_f^-/_{U_f^-}(W_p^+).
	\label{eq:shift_oblique_projection}
\end{equation}
Define also the truncated observability matrix $\Gamma_{i-1}$ as $\hat{\Gamma}_i$ with the last $l$ rows removed. The shifted future state sequence is,
\begin{equation}
	\hat{X}_{i+1} = \Gamma_{i-1}^{\dagger}\mathcal{O}_{i-1}.
	\label{eq:shifted_future_states}
\end{equation}
With the two state sequences, the first Hankel block of the \textit{future} outputs, and the \textit{future} inputs, the system matrices follow from the solution of the least squares problem,
\begin{equation}
	\begin{bmatrix}
		\hat{X}_{i+1} \\ Y_{i|i}
	\end{bmatrix} =
	\begin{bmatrix}
		A & B \\ C & D
	\end{bmatrix}
	\begin{bmatrix}
		\hat{X}_{i} \\U_{i|i}
	\end{bmatrix}.
	\label{eq:matrices_least_squares}
\end{equation}

This is a robust and well-tested algorithm for identifying the linear dynamics of a system, with the clear limitation of being unable to capture nonlinear dynamics.
\subsubsection{Rank Conditions and Persistence of Excitation}\label{sec:Rank Conditions and Persistence of Excitation} 
For the special case of deterministic subspace identification of autonomous systems, the data equation~\eqref{eq:system_obs_mark} reduces to,
\begin{equation}
	Y_{0|2i-1} = \Gamma_{2i}X_{0\colon T-2i}.
	\label{eq:autonomous_system_obs}
\end{equation}
Without inputs, $Y_{0|2i-1}$ is a linear combination of the columns of the extended observability matrix. Implying that the column space of the observability matrix contains the column space of the Hankel matrix of outputs $\texttt{col}(Y_{0|2i-1})\subseteq\texttt{col}(\Gamma_{2i})$; it does not imply that the rank of the matrices is equal, because it is possible to have that the $\texttt{rank}(Y_{0|2i-1})<\texttt{rank}(\Gamma_{2i})$. To conclude that in fact there is an equality, the following conditions must be met: (1) The number of of Hankel blocks is greater than the order of the system, i.e., $2i>n$. (2) The number of columns in the Hankel matrix of outputs is greater than the number of Hankel blocks, i.e., $j>2i$. Finally (3) the sequence of states $X_{0\colon T-2i}$ has full row rank $n$, i.e., $\texttt{rank}(X_{0\colon T-2i})=n$. The first two conditions depend on the available number of data points and the choice of Hankel blocks. The third condition depends on the initial condition and the length of the state sequence.
\begin{lem}\citep{verhaegen_verdult_2007} Consider an autonomous system~\eqref{eq:lin_sys}. If the system is observable, if the initial condition $x(0)=x_0$ is informative enough, and the resulting sequence of outputs is long enough to have $2i>n$ Hankel blocks and $j\geq n$ columns, then,
	\begin{equation}
		\texttt{rank}(Y_{0|2i-1})=n
		\label{eq:hankel_out_rank_cond}
	\end{equation}
\end{lem}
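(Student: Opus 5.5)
The plan is to factor the output Hankel matrix through the data equation \eqref{eq:autonomous_system_obs}, $Y_{0|2i-1}=\Gamma_{2i}X_{0\colon T-2i}$, and then prove separately that each factor has rank $n$. The rank of the product is then handled with Sylvester's inequality:
\[
\texttt{rank}(\Gamma_{2i})+\texttt{rank}(X_{0\colon T-2i})-n\le\texttt{rank}(Y_{0|2i-1})\le\min\{\texttt{rank}(\Gamma_{2i}),\texttt{rank}(X_{0\colon T-2i})\}.
\]
If both factors have rank $n$, the two bounds coincide and give \eqref{eq:hankel_out_rank_cond}. In the autonomous case the state sequence satisfies $X_{0\colon T-2i}=\begin{bmatrix}x_0 & Ax_0 & \cdots & A^{j-1}x_0\end{bmatrix}$ by \eqref{eq:DS_solution} with $B=0$.

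The first factor is straightforward. Observability means $\begin{bmatrix}C^T & (CA)^T & \cdots & (CA^{n-1})^T\end{bmatrix}^T$ has rank $n$. Since $2i>n$, these rows all appear inside $\Gamma_{2i}$ as defined in \eqref{eq:Extended_observability_mat}. Hence $\Gamma_{2i}$ has full column rank $n$.

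The second factor is where I expect the real work, because the paper only says $x_0$ should be ``informative enough''. I would make this precise by requiring that the Krylov matrix $\mathcal{K}_n(A,x_0)=\begin{bmatrix}x_0 & Ax_0 & \cdots & A^{n-1}x_0\end{bmatrix}$ have rank $n$. Equivalently, $x_0$ is a cyclic vector for $A$: it has a nonzero component along every Jordan chain, and each eigenvalue has geometric multiplicity one. With $j\ge n$, the first $n$ columns of $X_{0\colon T-2i}$ are exactly $\mathcal{K}_n(A,x_0)$, so $\texttt{rank}(X_{0\colon T-2i})=n$. The upper bound $n$ is automatic because $X$ has $n$ rows.

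I would also remark why the definition of ``informative enough'' cannot be weakened. By Cayley--Hamilton, every later column $A^kx_0$ with $k\ge n$ lies in the span of the first $n$ columns. So the rank of the state sequence is exactly the dimension of the cyclic subspace generated by $x_0$, and no choice of $j$ can raise it above that. This shows the condition on $x_0$ is necessary as well as sufficient, which justifies fixing it as the working definition.

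Combining the two factor ranks with Sylvester's inequality yields $\texttt{rank}(Y_{0|2i-1})=n$. Once ``informative enough'' is pinned to $\texttt{rank}\,\mathcal{K}_n(A,x_0)=n$, the remaining steps are routine.
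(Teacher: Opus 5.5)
Your proof is correct and follows the paper's argument: factor $Y_{0|2i-1}=\Gamma_{2i}X_{0\colon T-2i}$, get $\texttt{rank}(\Gamma_{2i})=n$ from observability and $\texttt{rank}(X_{0\colon T-2i})=n$ from the initial condition, then apply Sylvester's inequality; your Krylov-rank definition of ``informative enough'' and the Cayley--Hamilton necessity remark make precise what the paper leaves vague. One small tightening of the aside: cyclicity needs a nonzero coordinate on the \emph{top} generalized eigenvector of each (single) Jordan block, not just a nonzero component along the chain, since an eigenvector of a $2\times 2$ Jordan block generates only a one-dimensional Krylov space.
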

\begin{pf}
	From the assumption that the system is observable, $\texttt{rank}(\Gamma_{2i})=n$. The $T-2i$ sequence of states from a non-zero initial condition is,
	\[X_{0\colon T-2i} = \left[x(0)\,\,Ax(0)\,\,A^2x(0)\,\,\cdots\,\,A^{T-2i}x(0)\right],\]
	and under the assumption that the initial condition is informative enough, $\texttt{rank}(X_{0\colon T-2i})=n$. Applying Sylvester's inequality to~\eqref{eq:autonomous_system_obs}, implies that the $\texttt{rank}(Y_{0|2i-1})=n$.\hfill{} \qed
\end{pf}
\begin{rem}
	This condition on the Hankel matrix of outputs implies the identifiability of the $(A,\,C)$ pair. Meaning that a sufficiently informative trajectory in the absence of an input, is still sufficient for the identification of the $A$ and $C$ matrices. The rank condition on the sequence of inputs from the fundamental lemma is sufficient and necessary only under zero initial conditions of a non-autonomous system.
\end{rem}
For the general case of non autonomous systems, the possibility to identify the dynamics comes from a rank condition of the sequence of states, the Hankel matrix of inputs, and the Hankel matrix of outputs. The rank conditions that these matrices must satisfy is the so-called persistence of excitation lemma from Willems~\cite{Willems04}.

\begin{defn}
	\label{def:persistence_excitation}
	The sequence $s=\{s_k\}_{k=0}^{T}\in\mathbb{R}^{\sigma\times T}$ is persistently exciting of order $i$ if the associated Hankel matrix $S_{0|i-1}$ has full row rank, i.e., $\texttt{rank}(S_{0|i-1})=\sigma i$.
\end{defn}

For a state sequence $X_{0\colon T-2i}$ and the corresponding Hankel matrix of inputs $U_{0|2i-1}$, the rank condition to satisfy is:
\begin{lem}\citep{Willems04, verhaegen_verdult_2007}
	\label{lem:state_input_sequence}
	Consider a controllable system~\eqref{eq:lin_sys}, and an input-state trajectory $(x,u)=\{x_k;u_k\}_{k=0}^{T-1}\in\mathbb{R}^{(n+m)\times T}$ from zero initial conditions. If $u$ is persistently exciting of order $n+2i$, then
	\[\texttt{rank}\left(
		\begin{bmatrix}X_{0\colon T-2i} \\U_{0|2i-1}
			\end{bmatrix}\right)=n+2im,\]
	then,
	\[\texttt{rank}(Y_{0|2i-1}\Pi_{U^{\bot}})=n\]
	and,
	\[\texttt{col}(Y_{0|2i-1}\Pi_{U^{\bot}})=\texttt{col}(\Gamma_{2i}),\]
\end{lem}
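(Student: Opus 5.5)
The statement has two parts: a rank claim on the stacked state--input matrix, and its consequences for the projected output Hankel matrix. I will prove them in that order.

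\emph{First part (the rank claim).} I follow the standard route of \citet{Willems04} and \citet{verhaegen_verdult_2007}. Suppose a row vector $[\xi^T\;\eta^T]$ with $\xi\in\mathbb{R}^n$, $\eta\in\mathbb{R}^{2im}$ annihilates $\begin{bmatrix}X_{0\colon T-2i}\\ U_{0|2i-1}\end{bmatrix}$.
\begin{itemize}
\item From zero initial conditions, \eqref{eq:DS_solution} gives each state column as $x(k)=\sum_{s} A^{k-s-1}Bu(s)$, a linear combination of past inputs.
\item By Cayley--Hamilton, $x(k)$ can be rewritten in terms of the last $n$ inputs $u(k-n),\dots,u(k-1)$ plus powers of $A$ acting on earlier states. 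The cleaner device is the identity $x(k)=\mathcal{C}_n[u(k-1);\dots;u(k-n)]+A^n x(k-n)$.
\item Iterating this identity together with the characteristic polynomial of $A$ lets me eliminate the state. The annihilating vector then produces a vector $\zeta\neq0$ with $\zeta^T U_{0|n+2i-1}=0$ (indices shifted as needed), contradicting persistence of excitation of order $n+2i$.
\item Controllability is used to show that $\xi\neq0$ forces a nonzero coefficient on the input block, so $\zeta\neq0$.
\end{itemize}
The main obstacle is this elimination step. It needs careful bookkeeping of the Hankel shifts, and it forces the full $n$ extra block rows, which explains the order $n+2i$ rather than $2i$. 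I would first treat the single-input case via the minimal polynomial, then extend blockwise.

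\emph{Second part (consequences for the projection).} Once the stacked matrix has full row rank, $U_{0|2i-1}$ has full row rank, so $\Pi_{U^\bot}$ in \eqref{eq:pi_u_orthogonal} is well defined. From \eqref{eq:system_obs_mark} and \eqref{eq:column_space}, $Y_{0|2i-1}\Pi_{U^\bot}=\Gamma_{2i}X_{0\colon T-2i}\Pi_{U^\bot}$.

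I then claim $\texttt{rank}(X_{0\colon T-2i}\Pi_{U^\bot})=n$. Suppose $\xi^T X\Pi_{U^\bot}=0$. Then the row $\xi^TX$ lies in the row space of $U_{0|2i-1}$, so $\xi^TX=\eta^TU_{0|2i-1}$ for some $\eta$. This gives a nontrivial left annihilator $[\xi^T\;-\eta^T]$ of the stacked matrix unless $\xi=0$, so full row rank forces $\xi=0$.

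Since the system is observable and $2i>n$, $\Gamma_{2i}$ has full column rank $n$. Hence multiplying by $\Gamma_{2i}$ preserves rank (Sylvester's inequality, as in the proof of the previous lemma), giving $\texttt{rank}(Y_{0|2i-1}\Pi_{U^\bot})=n$.

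Finally, the column space of $\Gamma_{2i}X\Pi_{U^\bot}$ is contained in $\texttt{col}(\Gamma_{2i})$, and both have dimension $n$, so they are equal: $\texttt{col}(Y_{0|2i-1}\Pi_{U^\bot})=\texttt{col}(\Gamma_{2i})$.
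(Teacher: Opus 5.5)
Your proposal is correct. The paper does not prove this lemma itself; it quotes it from \citet{Willems04} and \citet{verhaegen_verdult_2007}. Your two-step argument is the standard proof from those sources. First, you eliminate the state with the characteristic polynomial of $A$, so that a left annihilator $[\xi^T\;\eta^T]$ of the stacked matrix becomes a left annihilator $\zeta$ of the depth-$(n+2i)$ input Hankel matrix. Second, you obtain $\texttt{rank}(X_{0\colon T-2i}\Pi_{U^\bot})=n$ from the row-space argument and finish with Sylvester's inequality and a dimension count.

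The only argument of this type the paper writes out is for the lifted analogue, Lemma~\ref{lem:lifted_state_inpu_sequence}, and there the state is removed differently. The dependence is shifted $b_p$ steps, rewritten through $\Xi_{b_p}=A_S^{b_p}\Xi_0+\mathcal{C}_{b_p}U_p$, and post-multiplied by $\Pi_{\Xi_0^\bot}$. Persistence of excitation is then invoked to assert that $U\Pi_{\Xi_0^\bot}$ keeps full row rank. That route is shorter, but for a full-row-rank state matrix the asserted property is equivalent to the rank statement being proved, and the extra depth of the excitation hypothesis never enters. Your Cayley--Hamilton elimination is exactly the step that turns excitation of order $n+2i$ (rather than $2i$) into the needed independence, so it is the version that actually closes the argument.

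When you write out the first part, make the non-vanishing of $\zeta$ explicit. Because the characteristic polynomial is monic, the last $2i$ blocks of $\zeta$ recover $\eta$ by back-substitution. If $\eta=0$, the first $n$ blocks are a unit-triangular combination of $\xi^TB,\xi^TAB,\dots,\xi^TA^{n-1}B$. So $\zeta=0$ forces $\xi^T[B\;AB\;\cdots\;A^{n-1}B]=0$, and controllability gives $\xi=0$.

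Also state observability of $(A,C)$ and $2i\geq n$ as explicit hypotheses. The lemma as printed only says ``controllable'', and without these $\texttt{rank}(\Gamma_{2i})<n$, so the conclusion fails. The section's standing assumption and the minimality assumption in \citet{verhaegen_verdult_2007} supply them. Finally, your argument never uses $x(0)=0$; that is fine, since the rank result holds for any initial state.
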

Lemma~\ref{lem:state_input_sequence} proves that the observability matrix $\Gamma_{n}$ of a system is identifiable, and the remaining process to get the system matrices feasible.
Finally, the fundamental lemma is,
\begin{lem}\citep{Markovsky_2005}
	Consider a controllable system~\eqref{eq:lin_sys}, and an input-output trajectory $(u_{0|T-1},y_{0|T-1})=\{u_{0|T-1};\}\in\mathbb{R}^{(m+l)\times T}$. If the input sequence $u_{0|T-1}$ is is persistently exciting of order $n+2im$. Then,
	\begin{enumerate}[label=(\roman*)]
		\item Any $2i$-length trajectory $(u_{0|2i-1},y_{0|2i-1})$ of the linear system~\eqref{eq:lin_sys} can be written as a linear combination of of the columns of the combined Hankel matrix of inputs and outputs,
		      \begin{equation}
			      \begin{bmatrix}U_{0|2i-1} \\Y_{0|2i-1}
			      \end{bmatrix}
			      \label{eq:hankel_matrix_input_output}
		      \end{equation}
		\item Any linear combination of~\eqref{eq:hankel_matrix_input_output} is a $2i$ length trajectory of the system.
	\end{enumerate}
\end{lem}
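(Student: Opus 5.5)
We prove the fundamental lemma by combining Lemma~\ref{lem:state_input_sequence} with the data equation~\eqref{eq:system_obs_mark}. The stacked Hankel matrix~\eqref{eq:hankel_matrix_input_output} factors as
\[
\begin{bmatrix}U_{0|2i-1}\\ Y_{0|2i-1}\end{bmatrix}
=\begin{bmatrix}0 & I_{2im}\\ \Gamma_{2i} & H_{2i}\end{bmatrix}
\begin{bmatrix}X_{0\colon T-2i}\\ U_{0|2i-1}\end{bmatrix}.
\]
Part (ii) follows directly from this factorization. For any $\alpha\in\mathbb{R}^{j}$, set $\bar x_0 = X_{0\colon T-2i}\alpha$ and $\bar u = U_{0|2i-1}\alpha$. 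Then $Y_{0|2i-1}\alpha=\Gamma_{2i}\bar x_0+H_{2i}\bar u$. By~\eqref{eq:DS_solution} and the output equation, this is exactly the output of~\eqref{eq:lin_sys} (deterministic case) started at $\bar x_0$ and driven by $\bar u$. Hence every linear combination of the columns is a $2i$-length trajectory, and no excitation assumption is needed for this direction.

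For part (i), take an arbitrary $2i$-length trajectory $(\bar u,\bar y)$ generated from some initial state $\bar x_0\in\mathbb{R}^n$. By the same data-equation argument, $\bar y=\Gamma_{2i}\bar x_0+H_{2i}\bar u$. It therefore suffices to find $\alpha$ with
\[
\begin{bmatrix}X_{0\colon T-2i}\\ U_{0|2i-1}\end{bmatrix}\alpha=\begin{bmatrix}\bar x_0\\ \bar u\end{bmatrix}.
\]
This system is solvable for every right-hand side exactly when the matrix $\left[\begin{smallmatrix}X_{0\colon T-2i}\\ U_{0|2i-1}\end{smallmatrix}\right]$ has full row rank $n+2im$. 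That is precisely the conclusion of Lemma~\ref{lem:state_input_sequence} under controllability and persistency of excitation of sufficient order. Multiplying the solvable system on the left by the block factor then gives $[\bar u;\bar y]$ as a linear combination of the columns of~\eqref{eq:hankel_matrix_input_output}.

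The main obstacle is the rank step, in two respects. First, the excitation order must be reconciled: Lemma~\ref{lem:state_input_sequence} asks for order $n+2i$, whereas the statement says $n+2im$. I would read the hypothesis as at least $n+2i$, which the stated order implies whenever $m\geq1$, and then invoke the lemma. Second, Lemma~\ref{lem:state_input_sequence} assumes zero initial conditions, while the data trajectory here may start anywhere. If a self-contained argument is needed, I would follow Willems' proof. Suppose a left annihilator $[\eta^T\ \xi^T]$ of the state/input matrix exists. Shifting it along the trajectory via $x(k+1)=Ax(k)+Bu(k)$ produces $n+1$ independent left annihilators of the input Hankel matrix of depth $n+2i$; this uses controllability through the Cayley--Hamilton theorem applied to $\eta^T A^k B$. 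That contradicts persistency of excitation, so the annihilator must vanish. This argument does not depend on the initial state, which removes the zero-initial-condition restriction.
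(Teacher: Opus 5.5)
The paper gives no proof of this lemma; it only cites it. The relevant comparison is therefore with the standard state-space proof behind the citation (Willems et al., De Persis--Tesi, van Waarde et al.), and your proposal follows it. Part (ii) comes by linearity from the factorization through $\left[\begin{smallmatrix}X_{0\colon T-2i}\\ U_{0|2i-1}\end{smallmatrix}\right]$, and part (i) from that matrix having full row rank $n+2im$. Both of your caveats are correct. First, excitation of order $k$ implies excitation of every lower order $k'$, because the first $k'$ block rows of the depth-$k$ Hankel matrix form a column-submatrix of the depth-$k'$ one; so $n+2im\geq n+2i$ suffices. Second, Lemma~\ref{lem:state_input_sequence} as written assumes zero initial conditions, so it cannot be quoted verbatim here.

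What needs fixing is your sketch of the rank step, which is the real content of the lemma. Write $u_{[a,b]}$ for the stacked inputs $u(a),\dots,u(b)$. Shifting $\eta^Tx(k)+\xi^Tu_{[k,k+2i-1]}=0$ by $s=0,\dots,n$ and expanding $x(k+s)=A^sx(k)+\sum_{r<s}A^{s-1-r}Bu(k+r)$ does not give left annihilators of $U_{0|n+2i-1}$. Each shifted relation still contains $\eta^TA^sx(k)$, so you get $n+1$ left annihilators of the stacked matrix $\left[\begin{smallmatrix}X_{0\colon T-2i-n}\\ U_{0|n+2i-1}\end{smallmatrix}\right]$. (If they annihilated $U_{0|n+2i-1}$ alone, one would already contradict excitation.) You can close the argument in either of two ways.

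\begin{enumerate}[label=(\alph*)]
\item Combine the $n+1$ relations using the characteristic-polynomial coefficients $\alpha_0,\dots,\alpha_{n-1},\alpha_n=1$ of $A$. Cayley--Hamilton removes the state and leaves one vector $v$ with $v^TU_{0|n+2i-1}=0$. The last $2i$ blocks of $v$ involve only $\xi$, in triangular form with unit diagonal, so $v=0$ forces $\xi=0$. The first $n$ blocks then reduce to $\eta^TA^kB=0$ for $k=0,\dots,n-1$, and controllability gives $\eta=0$. Hence $v\neq0$, contradicting excitation of order $n+2i$.
\item Show the $n+1$ annihilators are linearly independent: by triangularity in $\xi$ if $\xi\neq0$, and by controllability if $\xi=0$. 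Full row rank of $U_{0|n+2i-1}$ bounds the left kernel of the stacked matrix by $n$, which gives the contradiction.
\end{enumerate}

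So Cayley--Hamilton is what eliminates the state, and controllability is what makes the resulting annihilator nonzero. With this repair the argument never uses the initial state, and your proof is complete.

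This is also the step that the paper's argument for the lifted analogue, Lemma~\ref{lem:lifted_state_inpu_sequence}, does not really establish. There it is asserted that the input Hankel matrix keeps full row rank after projection by $\Pi_{\Xi_0^\bot}$, which is essentially the rank statement being proved. Your shift-by-$n$ argument avoids that circularity by exploiting the extra depth $n$ in the excitation order.
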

\subsection{pqEDMD Identification}\label{sub:pqEDMD Identification} 
The objective of the EDMD algorithm is to give an approximation in discrete-time of the time-evolution of a dynamical system~\eqref{eq:nonlin_affine_system} from input and output data~\eqref{eq:dataMat}, assuming that the full state is available at output, i.e., $h(x)=I_{n}$. The data may come from the measurements of a real system or the numerical integration of a differential equation. The result is a nonlinear system of equations that describes the time-evolution of a set of nonlinear functions (the observables) $\Psi\colon\mathbb{R}^l\rightarrow\mathbb{C}^d$ by the effect of a \textit{linear} operator. The nonlinear time-evolution of the original dynamical system~\eqref{eq:nonlin_affine_system} becomes the linear evolution of the function space $\Psi(y)$ under the effect of the operator,
\begin{subequations}
	\label{eq:EDMD}
	\begin{align}
		\Psi(y(k+1)) & = A_D\Psi(y(k)) + B_D u(k)\label{eq:EDMD1} \\
		y(k)         & = C_D \Psi(y(k)),\label{eq:EDMD2}
	\end{align}
\end{subequations}
where $A_D\in\mathbb{R}^{d\times d}$ is the operator on the function space that evolves the observables, $B_D\in\mathbb{R}^{d\times m}$ is the input to observable matrix, and $C_D\in\mathbb{R}^{l\times d}$ is the observables to output matrix.

For the approximation of the dynamics via the EDMD algorithm, assume that there is an available data set of some system trajectories, where trajectories are of sufficient size and information. Specifically, assume that there are $E$ available experiments, or sample trajectories, where each trajectory has $T_{i}$ data-points for $i=1,\,\dots,\,E$. For each one of these experiments, divide the data into two distinct sequences, the \textit{past} and \textit{future} values ($_+$ and $_-$ respectively, rather than $_p$ and $_f$ to avoid confusion with the previous section). These data sets are:
\begin{subequations}
	\label{eq:edmd_data_mat_i}
	\begin{align}
		\left\{Y^{i}_+\right.             & = \left.
		\begin{bmatrix}y^{i}(1) & y^{i}(2) & \cdots & y^{i}(T_i)
		\end{bmatrix}\right\}_{i=1}^E   \\
		\left\{Y^{i}_-\right.             & = \left.
		\begin{bmatrix}y^{i}(0) & y^{i}(1) & \cdots & y^{i}(T_i-1)
		\end{bmatrix}\right\}_{i=1}^E \\
		\left\{U^{i}_{\phantom{+}}\right. & = \left.
		\begin{bmatrix}u^i(0) & u^i(1) & \cdots & u^i(T_i-1)
		\end{bmatrix}\right\}_{i=1}^E,
	\end{align}
\end{subequations}
where each $y_i^+$ is the output one step ahead of the corresponding $y_i^-$. From the complete set of data matrices~\eqref{eq:edmd_data_mat_i}, the data-matrix to calculate EDMD is the horizontal concatenation of all the \textit{future} and \textit{past} matrices,
\begin{subequations}
	\label{eq:edmd_data_mat}
	\begin{align}
		Y_+             & =
		\begin{bmatrix}Y^{1}_+ & Y^{2}_+ & \cdots & Y^{E}_+
		\end{bmatrix} \\
		Y_-             & =
		\begin{bmatrix}Y^{1}_- & Y^{2}_- & \cdots & Y^{E}_-
		\end{bmatrix} \\
		U_{\phantom{+}} & =
		\begin{bmatrix}U^{1}_{\phantom{+}} & U^{2}_{\phantom{+}} & \cdots & U^{E}_{\phantom{+}}
		\end{bmatrix},
	\end{align}
\end{subequations}
so that for $E$ trajectories, the total number of available points $N=\sum_{i=1}^{E}(T_i-1)$.

The set of observables comes from a family of orthogonal polynomials, where each individual element of the set comes from the product of $l$ univariate polynomials. Additionally, a vector of orders $\alpha\in\mathbb{N}_0^{l}$ defines each of the multivariate polynomials, and the selection criteria for the inclusion of an observable comes from the comparison of the q-quasi norm of the orders $\alpha$ against the maximum polynomial order $p$. The definition of the q-quasi norm is $\Vert\alpha\Vert=(\sum_{i=1}^l\alpha_i^q)^{1/q}$ for $q\in\mathbb{R}$. Then, if $\left\{\alpha\in\mathbb{R}^l\colon\Vert\alpha\Vert\leq p\right\}$,
the polynomial does not get excluded from the basis~\cite{Garcia-Tenorio2022a}.

With the data matrices and the set of observables, the system matrices follow
from the solution of the least squares problems,
\begin{subequations}
	\label{eq:pqedmd_least_squares}
	\begin{align}
		\Psi(Y_{+}) & =
		\begin{bmatrix}A_D & B_D
		\end{bmatrix}
		\begin{bmatrix}\Psi(Y_{-}) \\U
		\end{bmatrix} + r_{[A_D\,B_D]}\label{eq:AB_least_sq}           \\
		Y_{-}       & = C_D\Psi(Y_{-}) + r_{C_D},\label{eq:C_least_sq}
	\end{align}
\end{subequations}
where $r_{[A_D\,B_D]}$ and $r_{C_D}$ are the residual terms to minimize.

From the many methods for solving the least squares problems, we want emphasize the use of a singular value decomposition and the effective rank of the regression matrix in~\eqref{eq:AB_least_sq} to improve the accuracy of the regression~\cite{Garcia-Tenorio2025}. For the second least squares problem~\eqref{eq:C_least_sq}, instead of a numerical solution, exploiting the structure of the order one polynomials in the set of observables leads to an analytical solution to recover the state~\cite{Garcia-Tenorio2022a}.

Compared to subspace methods, this algorithm has the advantage of capturing nonlinear dynamics, at the cost of reduced robustness to uncertainties.
\section{The pqSEDMD}\label{sec:The pqSEDMD} 
This section shows how the shortcomings of hte two previous algorithms are alleviated via their combination. Lifting the data into the function space enables the identification of nonlinear dynamics with a linear operator. While the subspace methods provide a robust identification for uncertain systems, increasing the prediction accuracy in the function space.

\subsection{Lifting and Hankel Matrices}\label{sub:Lifting and Hankel Matrices} 
As with the pqEDMD, we assume that there are $E$ available experiments for the identification, where each trajectory has $T_i$ data-points. Instead of dividing the available experiments into the past and future matrices, the treatment for the pqSEDMD algorithm is to lift the output with the set of observables, and then apply the Hankel transformation. As with the pqEDMD, we will not apply any lifting to the input. Different from the pqEDMD, there will be no concatenation of the data matrices before the lifting or Hankel transformation, instead, we carry the individual matrices and concatenate when necessary.

In contrast with traditional subspace methods, where the number of Hankel blocks is the same for the \textit{past} and \textit{future} matrices, this development treats these two quantities as distinct. Empirical verification shows that using the same number of blocks for both matrices produces numerical instabilities. Therefore, the lifted output Hankel matrices and the input Hankel matrices use $b_f$ future blocks and $b_p$ past blocks respectively. With these considerations in place, define the set of lifted output Hankel matrices as,
\[
	\left\{\Psi^{i}_{0|b_f+b_p-1}\in\mathbb{R}^{(b_f+ b_p)d \times j_i}\right\}_{i=1}^{E},
\]
where the $\Psi^{i}_{0|b_H-1}$ denotes the Hankel matrix with $b_H=b_p+b_f$ blocks of the lifted matrix $Y^i$, i.e., $\Psi(Y^i)$. Then, each element of the set is,
{\normalsize\begin{align}
			 & \Psi^{i}_{0|b_H-1}\triangleq \nonumber \\
			 &
			\begin{bmatrix}
				\Psi(y(0))     & \Psi(y(1))     & \cdots & \Psi(y(j_i-1))     \\
				\Psi(y(1))     & \Psi(y(2))     & \cdots & \Psi(y(j_i))       \\
				\vdots         & \vdots         & \ddots & \vdots             \\
				\Psi(y(b_p-1)) & \Psi(y(b_p))   & \cdots & \Psi(y(b_p+j_i-2)) \\
				\hdotsfor{4}                                                  \\
				\Psi(y(b_p))   & \Psi(y(b_p+1)) & \cdots & \Psi(y(b_p+j_i-1)) \\
				\Psi(y(b_p+1)) & \Psi(y(b_p+2)) & \cdots & \Psi(y(b_p+j_i))   \\
				\vdots         & \vdots         & \ddots & \vdots             \\
				\Psi(y(b_H-1)) & \Psi(y(b_H))   & \cdots & \Psi(y(b_H+j_i-2))
			\end{bmatrix}\nonumber
		\end{align}}
\begin{equation}
	=
	\begin{bmatrix}
		\Psi^{i}_{0|b_p-1} \\\hdotsfor{1}\\\Psi^{i}_{b_p|b_H-1}
	\end{bmatrix}=
	\begin{bmatrix}
		\Psi^{i}_p \\\hdotsfor{1}\\\Psi^{i}_f
	\end{bmatrix}.
	\label{eq:lifted_hankel_output}
\end{equation}
For the set of input signals, we follow a similar approach to~\eqref{eq:input_Hankel}, where instead of
$2i$ Hankel blocks, there are $b_p+b_f$ in each matrix of the set,
\[
	{\left\{U^i_{0|b_p+b_f-1}\in\mathbb{R}^{(b_p+b_f)m\times j_i}\right\}}_{i=1}^{E}.
\]
As in the linear case, the \textit{past}/\textit{future} division serves to
construct instrumental variables that handle measurement noise. Define the set
of instrumental variables,
	${\left\{W^{i}_{\Psi_p}\in\mathbb{R}^{b_p(d+m)\times j_i}\right\}}$,
whose elements are,
\begin{equation}
	{\left\{W^{i}_{\Psi_p}\triangleq
		\begin{bmatrix}
			U^{i}_{0|b_p-1} \\\Psi^{i}_{0|b_p-1}
		\end{bmatrix}\in\mathbb{R}^{b_p(d+m)\times j_i}\right\}}_{i=1}^{E} .
	\label{eq:lifted_Wp}
\end{equation}
For all matrices in~\eqref{eq:lifted_hankel_output} and~\eqref{eq:lifted_Wp}, the number of columns for each element of the set is $j_i = T_i - (b_p + b_f) + 1$, for $i = 1,\,\dots,\,E$.

Many parameters affect the affect the overall outcome of the algorithm, starting with the construction of the observables and the Hankel matrices. For the observables: there are many families of orthogonal polynomials that perform differently depending on the distribution of the noise, and the p-q parameters for the  reduction. For the Hankel matrices: the selection of \textit{past} and \textit{future} blocks affect the instrumental variables and therefore the robustness of the algorithm for different distributions of the noise. All these parameters have an effect on the order of the subspace. The following sections will provide more insight related to their implications.
\subsection{Lifted Subspace}\label{sub:Lifted Subspace} 
Before stating the model formally, it is worth clarifying the use of of two distinct liftings. In the standard subspace setting for linear systems, the state $x$ and output $y$ are related by the output equation $y=Cx$. The same separation carries over to the lifted subspace: the nonlinear observable map $\Psi$ acts on the measured output, and the additional lifting $\Phi\colon\mathbb{R}^n\rightarrow\mathbb{R}^{d_s}$ acts on the latent state $x$. A linear operator $A_S\in\mathbb{R}^{d_s\times d_s}$ governs the dynamics of the nonlinear latent state in a higher-dimensional function-space. A matrix $C_S\in\mathbb{R}^{d\times d_s}$ plays the role of the output matrix $C$ in the linear case, connecting the lifted state $\Phi(x)$ to the lifted output $\Psi(y)$. With this in mind, the formulation of the identification problem in the lifted subspace assumes the existence of a linear operator governing the evolution of the lifted state, and a reconstruction of the original set of observables, and the output of the system. This formulation is
\begin{subequations}
	\label{eq:lifted_subspace}
	\begin{align}
		\Phi(x(k+1)) & = A_S\Phi(x(k)) + B_Su(k)\label{eq:ls_dynamics} \\
		\Psi(y(k))   & = C_S\Phi(x(k))\label{eq:ls_psi_output}         \\
		y(k)         & = C_D\Psi(y(k))\label{eq:ls_y_output}
	\end{align}
\end{subequations}
where $B_S\in\mathbb{R}^{n_s\times m}$ is the input-to-lifted-state matrix, and $C_D$ is the same output projection matrix used in pqEDMD. Furthermore, we assume that system~\eqref{eq:lifted_subspace} is minimal, in the sense that the observability matrix of the pair $(A_S,\,C_S)$ is of full row rank, and the controllability matrix of the pair $(A_S, B_S)$ is of full columns rank.

The solution to the lifted dynamics~\eqref{eq:lifted_subspace} corresponds to the successive application of $A_S$ on the lifted initial state, along with the effect of the input sequence:
\begin{equation}
	\Phi(x(k)) = A_S^k \Phi(x(0)) + \sum_{i=0}^{k-1}A_S^{k-i-1}B_S u(i).
	\label{eq:lift_DS_solution}
\end{equation}
For each available experiment, the input-output relationship in the function space follows from the system solution~\eqref{eq:lift_DS_solution}, the lifted output equation~\eqref{eq:ls_psi_output}, and the lifted Hankel
matrices~\eqref{eq:lifted_hankel_output} with $b_H$ block rows. Define the extended observability matrix as $\Gamma_{b_H}\in\mathbb{R}^{b_Hd\times n_s}$ and the Markov parameter matrix as $H_{b_H}\in\mathbb{R}^{b_Hd\times b_Hm}$ similar to~\eqref{eq:Extended_observability_mat} and~\eqref{eq:markov_parameter} respectively, but with respect to $A_S$, $B_S$, and $C_S$. From these components, the input-output relationship is,
\begin{equation}
	\left\{\Psi^{i}_{0|b_H-1} = \Gamma_{b_H}\Xi_{0\colon T_i-b_H} + H_{b_H}U_{0|b_H-1}\right\}_{i=1}^{E},
	\label{eq:lift_system_obs_mark}
\end{equation}
where $\{\Xi_{0\colon T_i-b_H}\}_{i=1}^{E}$ is a set of sequences of lifted state evaluations,
\begin{equation}
	\left\{\Xi_{0\colon T_i-b_H} =
	\begin{bmatrix}
		\Phi(x(0)) & \cdots & \Phi(x(T_i-b_H))
	\end{bmatrix}\right\}_{i=1}^{E}.
	\label{eq:lift_state_sequence}
\end{equation}
\subsection{Lifted SVD}\label{sub:lifted SVD} 
This section describes the computation of the singular value decomposition of $W_1\mathcal{O}W_2$ from the projections~(\ref{eq:Oi_mat}, \ref{eq:Oi_stochastic}). The SVD produces a lifted-observability matrix, and together with the $\mathcal{O}$ matrix, give as a result a sequence of lifted states~\eqref{eq:lift_state_sequence}. And from the lifted states, we get the lifted subspace matrices.
\subsubsection{Autonomous deterministic system}\label{sec:Autonomous Deterministic System} 
The main idea for the autonomous and deterministic system is to use the data equation~\eqref{eq:lift_system_obs_mark} to estimate the column space of the extended observability matrix $\Gamma_{b_H}$ similar to the linear case~\eqref{eq:autonomous_system_obs}. Absent of inputs, the data equation~\eqref{eq:lift_system_obs_mark} becomes,
\begin{equation}
	\left\{\Psi^{i}_{0|b_H-1} = \Gamma_{b_H}\Xi_{0|j_i-1}\right\}_{i=1}^{E},
	\label{eq:set_unf_data_eq}
\end{equation}
which is the direct counterpart of~\eqref{eq:autonomous_system_obs} in the
lifted space, valid for all available sample trajectories. Concatenating the
output Hankel matrices and state sequences across experiments yields,
\begin{equation}
	\Psi^Y = \Gamma_{b_H}\Xi
	\label{eq:unf_data_eq}
\end{equation}
where the $\Psi^Y$ and $\Xi$ matrices, are the concatenation of all elements in their respective sets,
\begin{align}
	\Psi^Y & =
	\begin{bmatrix}
		\Psi^{1}_{0|b_H-1} & \cdots & \Psi^{E}_{0|b_H-1}
	\end{bmatrix}\label{eq:lift_out_concat} \\
	\Xi    & =
	\begin{bmatrix}
		\Xi_{0|j_1-1} & \cdots & \Xi_{0|j_E-1}
	\end{bmatrix}.\label{eq:state_seq_concat}
\end{align}
It is clear from~\eqref{eq:unf_data_eq} that the Hankel sequences of lifted outputs is a linear combination of the extended observability matrix. Similar to the linear case~\eqref{eq:autonomous_system_obs} it is necessary to have a set of initial conditions sufficiently informative to perform the identification. Then, the matrix $\mathcal{O}_{b_H}$, now dependent on the total number of Hankel blocks for the \textit{past} and \textit{future} is,
\begin{equation}
	\mathcal{O}_{b_H} = \Psi^Y,
	\label{eq:O_bH_det_unforced}
\end{equation}
and the SVD that provides the column space of the extended observability matrix is,
\begin{equation}
	W_1\mathcal{O}_{b_H}W_2 =  \mathcal{U}_{d_s}\Sigma_{d_s}\mathcal{V}_{d_s}^T,
	\label{eq:WOW_SVD_unforced}
\end{equation}
where the $W$ matrices are the identity, and the truncation of the SVD depends on the order of the lifted sequence of states $d_s$. In the later, we will give an analysis regarding the choice of this parameter.
\subsubsection{Stochastic system}\label{sec:The stochastic system} 
For the lifted stochastic system, we use the set of lifted block Hankel matrices in the \textit{future} $\{\Psi^{i}_f=\Psi^{i}_{b_p|b_H-1}\}_{i=1}^{E}$, the set block Hankel matrices of future inputs $\{U_f^i\}_{i=1}^{E}$, an extended observability matrix with appropriate dimensions $\Gamma_{f}\in\mathbb{R}^{b_fd\times d_s}$ and a Markov parameter matrix $H_{f}\in\mathbb{R}^{b_fd\times b_fm}$ to get the concatenated data equation,
\begin{equation}
	\Psi^Y_{f} = \Gamma_{f}\Xi_f + H_fU_f.
	\label{eq:fut_lift_data_eq}
\end{equation}
And similar to the linear case~\eqref{eq:autonomous_system_obs}, $\mathcal{O}_{b_f}$ is the oblique projection,
\begin{equation}
	\mathcal{O}_{b_f} = \Psi^Y_{f}\Big/_{U_f}\mathbf{W}_{\Psi_p},
	\label{eq:O_bf_stochastic}
\end{equation}
where $W_{\Psi_p}$ is the horizontal concatenation of the instrumental variables across all experiments,
\begin{equation}
	W_{\Psi_p} =
	\begin{bmatrix}
		W^1_{\Psi_p} & \cdots & W^E_{\Psi_p}
	\end{bmatrix}.
	\label{eq:inst_concat}
\end{equation}
After applying the weighting matrices from chosen algorithm, the SVD that approximates the extended observability matrix is,
\begin{equation}
	W_1\mathcal{O}_{b_f}W_2 = \mathcal{U}_{d_s}\Sigma_{d_s}\mathcal{V}_{d_s},
	\label{eq:WOW_SVD_stoch}
\end{equation}
and the approximation of the extended observability matrix is,
\begin{equation}
	\Gamma_f = W_1^{-1}\mathcal{U}_{d_s}\Sigma_{d_s}^{1/2}.
	\label{eq:lift_ext_obs}
\end{equation}
For an autonomous stochastic system, the oblique projection~\eqref{eq:O_bf_stochastic} reduces to the orthogonal projection of $\Psi_f^Y$ onto the row space of $W_{\Psi_p}$, the weighting matrices $W_{\{1,2\}}$ are the identity, and the calculation of the extended observability matrix remains the same.
\subsection{Lifted system matrices}\label{sub:The lifted system matrices} 
The method to get the lifted systems matrices is analogous to the linear case from equations~(\eqref{eq:state_seq}-\eqref{eq:matrices_least_squares}), where the objective is to get a sequence of lifted states from the extended observability matrix and set up a least squares problem.
\subsubsection{Order of the lifted subspace}\label{sec:The order of lifted subspace} 
For the subspace identification of linear systems, the order $n$ of the systems corresponds to the number of nonzero singular values from the SVD~\eqref{eq:linear_svd}. For stochastic systems, the rule to distinguish the zero and nonzero singular values, depends on the gap between the $n^{\text{th}}$ and the $(n+1)^{\text{th}}$ singular value~\citep{verhaegen_verdult_2007}. For the order of the lifted system we propose to use the number of lifted outputs as the lower bound, because the order of an observables linear system cannot be less than the cardinality of the output, and the effective rank of~\eqref{eq:WOW_SVD_stoch} for the upper bound. We propose that the order of the lifted subspace is,
\begin{equation}
	d\leq d_s\leq \min\left\{r\colon \sigma_r\leq\epsilon\sigma_1\sum_{i=1}^E T_i\right\}.
	\label{eq:n_s_cond}
\end{equation}
Even though this condition does not solve the problem of selecting an order to the system, or provide a clear rule regarding the choice of $p$, $q$, \textit{future} and \textit{past} blocks parameters, it does provide a bound to the choice of the order of the lifted subspace.
\subsubsection{Lifted state sequence and solution}\label{sec:Lifted state sequence and solution} 
For the sequences of lifted states $\{\Xi_{0\colon T_i-b_H}\}_{i=1}^{E}$ we propose an alternative algorithm to~(\ref{eq:state_seq}-\ref{eq:matrices_least_squares}) that handles multiple experiments. In addition, we replace equations~(\ref{eq:shift_oblique_projection} and \ref{eq:shifted_future_states}) for the calculation of the shifted-lifted states for a \textit{past} and \textit{future} division of the lifted state sequence like the pqEDMD formulation~\eqref{eq:edmd_data_mat_i}.

The set of lifted state sequences is the application of~\eqref{eq:state_seq} to each of the $\mathcal{O}_{b_f}$ matrices using the extended observability matrix~\eqref{eq:Extended_observability_mat}, $\{\hat{\Xi}_{0|T_i-b_H} = \Gamma_{b_f}^{\dagger}\mathcal{O}^i_{b_f}\}_{i=1}^E$, where each of the projection matrices $\{\mathcal{O}_{b_f}^i\}_{i=1}^E$ is the application of~(\ref{eq:O_bH_det_unforced} or \ref{eq:O_bf_stochastic}) depending on the system (stochastic/deterministic and forced/autonomous).

From the approximation of the lifted state sequences, define the \textit{future} and \textit{past} lifted state sequences and the corresponding input and lifted output matrices as,
\begin{subequations}
	\label{eq:lift_edmd_mat_i}
	\begin{align}
		\left\{\hat{\Xi}^i_+\right. & = \left.
		\begin{bmatrix}
			\hat{\Phi}^i(x(1)) & \cdots & \hat{\Phi}^i(x(T_i-b_f))
		\end{bmatrix}\right\}_{i=1}^E \\
		\left\{\hat{\Xi}^i_-\right. & = \left.
		\begin{bmatrix}
			\hat{\Phi}^i(x(0)) & \cdots & \hat{\xi}^i(T_i-b_f-1)
		\end{bmatrix}\right\}_{i=1}^E,   \\
		\left\{U^i_-\right.         & = \left.
		\begin{bmatrix}
			u^i_{b_p|b_p}(0) & \cdots & u^i_{b_p|b_p}(T_i-b_f-1)
		\end{bmatrix}\right\}_{i=1}^E   \\
		\left\{\Psi^{i}_-\right.    & = \left.
		\begin{bmatrix}
			\Psi^{i}_{b_p|b_p}(0) & \cdots & \Psi^{i}_{b_p|b_p}(T_i-b_f-1)
		\end{bmatrix}\right\}_{i=1}^E
	\end{align}
\end{subequations}
where each $U^i_-$ is the first block of the \textit{future} Hankel matrix of inputs sliced from the first element until the penultimate. Then, the fomulation of the leasosquares problem follow from the concatenation of the \textit{past} and \textit{future} matrices,
\begin{subequations}
	\label{eq:lift_edmd_mat}
	\begin{align}
		\hat{\Xi}_+ & =
		\begin{bmatrix}
			\hat{\Xi}^1_+ & \cdots & \hat{\Xi}^E_+
		\end{bmatrix} \\
		\hat{\Xi}_- & =
		\begin{bmatrix}
			\hat{\Xi}^1_- & \cdots & \hat{\Xi}^E_-
		\end{bmatrix} \\
		U_-         & =
		\begin{bmatrix}
			U^1_- & \cdots & U^E_-
		\end{bmatrix}                 \\
		\Psi^{Y}_-  & =
		\begin{bmatrix}
			\Psi^{Y_1}_- & \cdots & \Psi^{Y_E}_-
		\end{bmatrix}.
	\end{align}
\end{subequations}

With the two sequences of lifted states, and the first block of the data and input Hankel matrices properly sliced, the least squares problem for the calculation of the $A_S$ operator, and the system matrices $B_S$ and $C_S$ is,
\begin{align}
	\hat{\Xi}_+ & =
	\begin{bmatrix}
		A_S & B_S
	\end{bmatrix}                                 %
	\begin{bmatrix}
		\hat{\Xi}_- \\U_-
	\end{bmatrix} + r_{[A_S\,B_S]}\label{eq:AB_lift_lsq}           \\
	\Psi^Y_-    & = C_S\hat{\Xi}_+ + r_{C_S},\label{eq:C_lift_lsq}
\end{align}
where $r_{[A_S\,B_S]}$ and $r_{C_S}$ are the residual term to minimize. Notice that in~\eqref{eq:C_lift_lsq}, $C_S$ acts on the sequence of states of the \textit{future}, to produce the lifted output of the \textit{past}. This is an empirical result from the implementation of the algorithm, the solution with the two slicings in the past or in the future increases the identification error.
\subsection{Prediction}\label{sec:Prediction} 
With the solution of the least squares problems~(\ref{eq:AB_lift_lsq}, \ref{eq:C_lift_lsq}, \ref{eq:C_least_sq}), the linear evolution of the lifted space and its relationship with the measured output~\eqref{eq:lifted_subspace} is fully determined. Assuming that only the initial output $y(0)$ is available, the required mapping between an initial measured output, and the corresponding initial condition of the lifted state $\xi(0)$ is,
\begin{equation}
	\xi(k) = C_S^{\dagger}\Psi(y(k)),
	\label{eq:to_lift_spacwe}
\end{equation}
for an arbitrary time instant $k$. It is necessary to have a relationship between the lifted output and the lifted state because in general, neither the standard EDMD nor this subspace extension span an invariant subspace of the dynamical system; they do not span the Koopman operator. The discrete Koopman subspace of an unforced nonlinear dynamical system $x(k+1)=f(x(k))$ satisfies, $\mathcal{K}\phi = \phi\circ f$,
where $\phi\colon\mathbb{R}^n\rightarrow\mathbb{R}$ are the scalar observables and $\mathcal{K}$ is the infinite-dimensional linear operator governing their evolution. In the limit of data and observables, the EDMD based algorithms span the invariant Koopman subspace~\citep{Williams2015} and, $\Psi(y(k)) = A_D^k\Psi(y(0))$.
In that ideal limit, evolving $y(0)$ $k$ steps is equivalent to the application of $A_D$ $k$ times and projecting the output back into the output space, and it is valid for $k=\infty$. In practice, this is generally not the case (none of the examples in this paper span a Koopman sybspace, despite the accuracy of the approximations). With a finite set of observables, the approximations are only valid for a finite horizon. To have accurate long term predictions, it is necessary to correct the lifted output at every step of the trajectory~\citep{Junker22}. Accordingly, the prediction for the pqSEDMD form an arbitrary initial output proceeds as follows: 1) lift the initial output with $\Psi$. 2) map it to the lifted state via~\eqref{eq:to_lift_spacwe}. 3) evolve one step ahead via~\eqref{eq:ls_dynamics}. 4) project back to the output space via~\eqref{eq:ls_psi_output}. 5) recover the output via~\eqref{eq:ls_y_output}. 6) repeat from step 1 for the required number of steps.
\subsection{Lifted rank conditions and persistence of excitation}\label{sec:Rank conditions and persistence of excitation} 
For the case of deterministic lifted-subspace identification of autonomous systems, the data equation~\eqref{eq:unf_data_eq} states that the lifted Hankel matrix of outputs are a linear combination of of the columns of the extended observability matrix $\Gamma_{b_H}$. For the approximation of the system in the lifted function space, we must also have a set of initial conditions whose trajectories provide enough information to reveal the nonlinear dynamics. To guarantee that the columns spaces are equal, i.e., $\texttt{col}(\Psi^Y)=\texttt{col}(\Gamma_{b_H})$, the following conditions must be met: (i) The number of Hankel Blocks is greater than the dimension of the function space of the state, i.e., $b_{b_H}>d_s$. (ii) The number of of columns in each of the Hankel matrix of outputs is greater then the number of blocks, i.e., $\{j_i>H_{b_H}\}_i=0^E$. Finally, (iii) The sequences of lifted states $\{\Xi^i\}_{i=1}^E$ has full row rank $d_s$.

\begin{lem}[Lifted subspace]
	Consider a system~\eqref{eq:nonlin_affine_system}, and assume that there exists an autonomous lifted subspace approximation~\eqref{eq:lifted_subspace}. If the system in the lifted subspace is observable, if the initial conditions are informative enough, and if the resulting sequences of lifted output are long enough to have $b_H>d_s$ Hankel blocks and $\{j_i>d_s\}_{i=1}^E$ columns, then
	\begin{equation}
		\texttt{rank}(\Psi^Y)=d_s
		\label{eq:lift_hankel_out_rank_cond}
	\end{equation}
\end{lem}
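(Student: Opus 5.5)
The plan is to mirror the proof of the linear autonomous lemma, applied to the concatenated lifted data equation~\eqref{eq:unf_data_eq}, $\Psi^Y=\Gamma_{b_H}\Xi$. The rank of the product $\Gamma_{b_H}\Xi$ will be pinned down from both sides with Sylvester's inequality.

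First, the left factor. By assumption the autonomous lifted system~\eqref{eq:lifted_subspace} is observable. Since $b_H>d_s$, the Cayley--Hamilton theorem shows that the first $d_s$ block rows of $\Gamma_{b_H}$ already contain the observability matrix of $(A_S,C_S)$. Hence $\texttt{rank}(\Gamma_{b_H})=d_s$, which is full column rank.

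Second, the right factor. In the autonomous case each block of~\eqref{eq:lift_state_sequence} has the form
\[\Xi_{0|j_i-1}=\left[\Phi(x_i(0))\;\;A_S\Phi(x_i(0))\;\cdots\;A_S^{j_i-1}\Phi(x_i(0))\right],\]
which follows from~\eqref{eq:lift_DS_solution} with $u=0$. The condition $j_i>d_s$ ensures each block has at least $d_s$ columns. I interpret ``informative enough'' as requiring that the Krylov spaces generated by the lifted initial states $\Phi(x_i(0))$, $i=1,\dots,E$, together span $\mathbb{R}^{d_s}$. Since column spaces add under horizontal concatenation, this gives $\texttt{rank}(\Xi)=d_s$ for the concatenation~\eqref{eq:state_seq_concat}, even when no single trajectory is rich enough on its own. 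This is the step I expect to be the main obstacle. It needs to be stated precisely, either as a cyclicity condition on $A_S$ relative to the set $\{\Phi(x_i(0))\}$ or simply as the hypothesis that $\Xi$ has full row rank, which is condition (iii) stated just before the lemma.

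Finally, Sylvester's inequality gives
\[\texttt{rank}(\Gamma_{b_H})+\texttt{rank}(\Xi)-d_s\le\texttt{rank}(\Psi^Y)\le\min\{\texttt{rank}(\Gamma_{b_H}),\texttt{rank}(\Xi)\}.\]
The lower bound equals $d_s+d_s-d_s=d_s$ and the upper bound equals $d_s$, so $\texttt{rank}(\Psi^Y)=d_s$. As a byproduct, $\texttt{col}(\Psi^Y)=\texttt{col}(\Gamma_{b_H})$, which is the property needed for the SVD~\eqref{eq:WOW_SVD_unforced}.
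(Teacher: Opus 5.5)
Your proposal is correct and follows essentially the same route as the paper: full column rank of $\Gamma_{b_H}$ from observability, $\texttt{rank}(\Xi)=d_s$ from the informativeness assumption on the concatenated lifted state sequences, and Sylvester's inequality applied to $\Psi^Y=\Gamma_{b_H}\Xi$. Your Krylov-space reading of ``informative enough'' only makes explicit what the paper takes directly as the hypothesis $\texttt{rank}(\Xi)=d_s$, and your columns $A_S^{k}\Phi(x_i(0))$ are the correct form of the lifted state sequence that the paper's display garbles.
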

\begin{pf}
	From the assumption that the system is observable, $\texttt{rank}(\Gamma_{b_H})=d_s$. All the sequences o lifted state evaluations $\{\Xi_{0\colon T_i-b_H}\}_{i=1}^{E}$ are,
	\begin{equation}
		\left\{\begin{bmatrix}
			\Phi(x(0)) & \cdots & A_S^{T_i-b_H}\Phi(x(T_i-b_H))
		\end{bmatrix}\right\}_{i=1}^{E},
		\label{eq:A_lift_state_squence}
	\end{equation}
	and $\Xi$ is the horizontal concatenation of this sequences as in~\eqref{eq:state_seq_concat}. From the assumption that the initial conditions are informative enouth, $\texttt{rank}(\Xi) = d_s$. Applying Sylvester's inequality to~\eqref{eq:unf_data_eq} implies that $\texttt{rank}(\Psi^Y)=d_s$.\hfill{} \qed
\end{pf}
\begin{rm}
	The observability assumption is related to the lifted subspace approximation. Meaning that there exist a full rank $(A_s,\,C_s)$ pair.
\end{rm}

The previous condition on the state sequence is necessary but not sufficient, we can show this by a counter example using the autonomous Duffing equation. Figure~\ref{fig:aut_duff} shows two approximations of the trajectories of the Duffing equation where the training set comes from two different initial conditions.
\begin{figure}[ht]
	\begin{center}
		\includegraphics[width=0.95\linewidth]{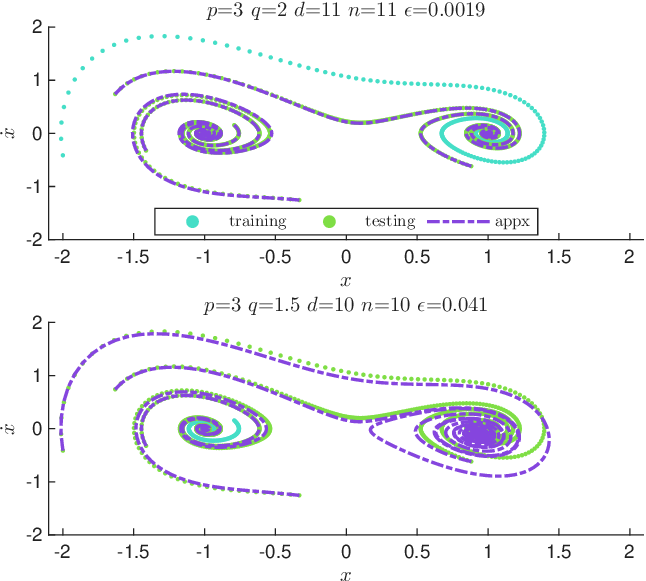}
	\end{center}
	\caption{Performance of the pqSEDMD algorithm for two different sets of training data, i.e., different initial conditions, while keeping everything else equal. While $\texttt{rank}(\hat{\Xi}_{0|j_{\text{top}}-1})=\texttt{rank}(\hat{\Xi}_{0|j_{\text{bot}}-1})=27$, the bottom case does not provide sufficient information.}\label{fig:aut_duff}
\end{figure}

The two approximations have the same parameters, and satisfy the rank condition on the sequence of lifted states. Nevertheless, the bottom approximation, does not provide sufficient information for the identification of the nonlinear dynamics of the system. Similar to the different flavors of the EDMD algorithm and Koopman operator approximations, the necessary amount of data, the initial conditions and sampling time that provides an accurate approximation depend on the dynamical systems to approximate. For the particular case of the Duffing oscillator in Figure~\ref{fig:aut_duff}, taking the outermost trajectory of the system provides sufficient information to model the two asymptotically stable attractors, from one single trajectory.

In summary, if there are enough points in the dataset and those points come from one or many informative enough initial conditions, then the concatenated sequence of lifted states will have full $\texttt{rank}(\Xi)=d_s$ making the lifted-observability matrix identifiable.

For the general case of non autonomous systems, we will also assume that there are many available trajectories of the system, and propose some rank conditions on the lifted state sequence and Hankel matrix of inputs as sufficient conditions to identify the nonlinear dynamics.

For a concatenated set of lifted state sequences $\Xi$~\eqref{eq:state_seq_concat} and the corresponding concatenation of the set of Hankel matrices of inputs $U$, the rank condition to satisfy is:

\begin{lem}[Lifted rank condition]
	\label{lem:lifted_state_inpu_sequence}
	Consider a controllable lifted system~\eqref{eq:lifted_subspace} and a set of input-to-lifted state trajectories,
	\[
		\left\{{(\Phi,u)}^{i}=\{\Phi(x(k))^{i};u(k)^i\}_{k=0}^{T_i-1}\in\mathbb{R}^{(d_s+m)\times T_i}\right\}_{i=1}^E,
	\]
	from zero initial conditions. If the horizontal concatenation of the set of inputs $u^i$ is persistently exciting of order $d_s+b_{H}m$, then
	\[
		\texttt{rank}\left(
		\begin{bmatrix}\Xi \\U
			\end{bmatrix}\right) = d_s + b_{H}m
	\]
	then,
	\[
		\texttt{rank}\left(\Psi^Y\Pi_{U^\bot}\right) = d_s
	\]
	and,
	\[
		\texttt{col}\left(\Psi^Y\Pi_{U^\bot}\right) = \texttt{col}\left(\Gamma_{b_H}\right)
	\]
\end{lem}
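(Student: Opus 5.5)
The proof has two parts. First, establish the joint rank condition by a lifted analogue of Lemma~\ref{lem:state_input_sequence}. Second, pass to the projected lifted outputs by right-multiplying the concatenated data equation by $\Pi_{U^\bot}$.

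For the first part, I would treat the lifted system~\eqref{eq:lifted_subspace} exactly as a linear system of order $d_s$ with input dimension $m$. Its minimality gives a controllable pair $(A_S,B_S)$, and Willems' argument applies verbatim with $n$ replaced by $d_s$ and $2i$ by $b_H$. The argument runs by contradiction. Suppose a nonzero left vector $[\eta^T\ \zeta^T]$ annihilates $\begin{bmatrix}\Xi\\ U\end{bmatrix}$. Using the solution formula~\eqref{eq:lift_DS_solution} from zero initial conditions, every column of $\Xi$ equals $\mathcal{C}_{d_s}(A_S,B_S)$ times a stack of the $d_s$ preceding inputs. Then $\eta^T$ composed with the Cayley--Hamilton relation of $A_S$ yields a nonzero annihilator of the input Hankel matrix with $d_s+b_H$ blocks; it is nonzero because controllability forces $\eta^T\mathcal{C}_{d_s}\neq 0$ whenever $\eta\neq0$, and $\zeta\neq0$ otherwise. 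This contradicts persistence of excitation of order $d_s+b_H$.

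The only new issue is the concatenation over $E$ experiments. A left annihilator of the concatenated matrix annihilates each block $\begin{bmatrix}\Xi_{0:T_i-b_H}\\ U^i_{0|b_H-1}\end{bmatrix}$ separately. The map from past inputs to the stacked lifted state and input is the same linear map for every $i$. Hence the annihilator pulls back to a common left annihilator of the horizontally concatenated deeper input Hankel matrices, which is exactly what the excitation hypothesis on the concatenation excludes. I expect this step to be the main obstacle. The hypothesis must be read as full row rank of the concatenation of the per-experiment deeper Hankel matrices, not of a single Hankel matrix of the concatenated signal, which would contain spurious cross-boundary columns. I would state this interpretation explicitly.

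For the second part, I would concatenate~\eqref{eq:lift_system_obs_mark} to obtain $\Psi^Y=\Gamma_{b_H}\Xi+H_{b_H}U$. Right-multiplying by $\Pi_{U^\bot}$ removes the input term and gives $\Psi^Y\Pi_{U^\bot}=\Gamma_{b_H}\Xi\Pi_{U^\bot}$. The joint rank condition has two consequences:
\begin{itemize}
\item $U$ has full row rank, so $\Pi_{U^\bot}$ in~\eqref{eq:pi_u_orthogonal} is well defined.
\item $\texttt{rank}(\Xi\Pi_{U^\bot})=d_s$. Indeed, if $\eta^T\Xi\Pi_{U^\bot}=0$, then $\eta^T\Xi$ lies in the row space of $U$, so $\eta^T\Xi-\zeta^TU=0$ for some $\zeta$, which forces $\eta=0$.
\end{itemize}

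Observability of $(A_S,C_S)$ with $b_H>d_s$ gives $\texttt{rank}(\Gamma_{b_H})=d_s$ with full column rank. Sylvester's inequality, as in the proof of the Lifted subspace lemma, then yields $\texttt{rank}(\Psi^Y\Pi_{U^\bot})=d_s$. The inclusion $\texttt{col}(\Psi^Y\Pi_{U^\bot})\subseteq\texttt{col}(\Gamma_{b_H})$ holds trivially, and the two spaces have equal dimension $d_s$, so they coincide.
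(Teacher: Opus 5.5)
The gap is in the first part. Your plan to treat the lifted model as an order-$d_s$ LTI system and invoke Willems' lemma is right, but the mechanism you sketch fails. The claim that every column of $\Xi$ equals $\mathcal{C}_{d_s}(A_S,B_S)$ times the stack of the $d_s$ preceding inputs is false unless $A_S$ is nilpotent. From zero initial conditions, $\Phi(x(k))=\mathcal{C}_k[u(k-1);\dots;u(0)]$ involves all past inputs. Equivalently, $\Phi(x(k))=A_S^{d_s}\Phi(x(k-d_s))+\mathcal{C}_{d_s}[u(k-1);\dots;u(k-d_s)]$, and the first term does not vanish; for $k<d_s$ the window would also reach negative times. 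Cayley--Hamilton reduces powers of $A_S$, but it cannot shorten this window to a fixed length. So $[\eta^T\mathcal{C}_{d_s}\;\zeta^T]$ is not an annihilator of any input Hankel matrix, and your nonzeroness argument, which rests on that form, has nothing to act on.

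The repair is Willems' shift argument. Write the dependence at columns $k,\dots,k+d_s$ of the same experiment and expand $\Phi(x(k+r))$. For $r=0,\dots,d_s$ this gives
\[
\eta^TA_S^r\Phi(x(k))+\sum_{j=0}^{r-1}\eta^TA_S^{r-1-j}B_S\,u(k+j)+\zeta^T u_{[k+r,\,k+r+b_H-1]}=0 .
\]
Summing with the coefficients $a_r$ of the characteristic polynomial of $A_S$ ($a_{d_s}=1$) kills the state term. What remains is $\theta^T u_{[k,\,k+d_s+b_H-1]}=0$ on the first $j_i-d_s$ columns of every experiment, with $\theta$ independent of $i$. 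This is exactly a common left annihilator of the per-experiment input Hankel matrices with $d_s+b_H$ blocks. It is nonzero in both cases:
\begin{itemize}
\item If $\zeta\neq0$ with last nonzero block $\zeta_s$, then block $d_s+s$ of $\theta$ equals $\zeta_s$, since only the $r=d_s$ term reaches that position.
\item If $\zeta=0$, the first $d_s$ blocks of $\theta$ are $\eta^T\mathcal{C}_{d_s}$ times an invertible (triangular after reversing block order) matrix built from the $a_r$, hence nonzero by controllability.
\end{itemize}
This contradicts the excitation hypothesis, and it does not even use the zero initial conditions.

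The rest of your proposal is sound. Reading the excitation hypothesis as full row rank of the concatenated per-experiment deep Hankel matrices is the correct interpretation. Your second part is also correct: $\Psi^Y\Pi_{U^\bot}=\Gamma_{b_H}\Xi\Pi_{U^\bot}$, then $\texttt{rank}(\Xi\Pi_{U^\bot})=d_s$ from the joint rank, full column rank of $\Gamma_{b_H}$, and the dimension count. You rightly import $b_H>d_s$ and observability from the standing minimality assumption, which the lemma itself does not list.

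This part goes beyond the paper, whose proof stops at the joint rank condition and never derives the projected rank or the column-space equality. For the joint rank the paper takes a different route. It shifts the dependence $b_p$ steps ahead, post-multiplies by $\Pi_{\Xi_0^\bot}$, and invokes persistence of excitation to assert that $U\Pi_{\Xi_0^\bot}$ has full row rank. That assertion is essentially the joint rank claim itself, so the repaired Willems argument above is the more self-contained way to close this step.
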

\begin{pf}
	The matrix has $d_s + b_{H}m$ rows, then it is sufficient to show that all its rows are linearly independent. Assuming the opposite means that there exists a nonzero lifted state vector $\eta\in\mathbb{R}^{d_s}$ and a nonzero input vector $\omega\in\mathbb{R}^{b_{H}m}$ such that,
	\[
		\eta^T\Xi + \omega^T U = 0.
	\]
	Consider a set of sequences of lifted state evaluations $\{\Xi_{0}^i=\Xi_{0\colon T_i - b_{H}}\}_{i=1}^{E}$ as defined by~\eqref{eq:lift_state_sequence}. According to~\eqref{eq:lift_DS_solution}, the evolution of those sequences $b_p$ steps ahead is $\Xi_{b_p}^{i} = A_S^{b_p}\Xi_{0}^i + \mathcal{C}_{b_p}U_{p}^i$, where the controllability matrix $\mathcal{C}_{b_p}=[B_S\,A_SB_S\,\cdots\,A_S^{b_p-1}B_S]$, and $U_p^i$ are the Hankel matrices of input sequences that drive the lifted states. If the linear dependence holds for the concatenation of the sequences of states, then it also holds for the future sequences of states, $\eta^T\Xi_{b_p}+\omega^TU_f=0$. Substituting the evolution of the initial sequences and rearranging the linear dependence yields,
	\begin{align*}
		\eta^T(A_S^{b_p}\Xi_0+\mathcal{C}_{b_p}U_p)+\omega^TU_f & =0 \\
		\eta^TA_S\Xi_0 +
		\begin{bmatrix}\eta^T\mathcal{C}_{b_p} & \omega^T
		\end{bmatrix}
		\begin{bmatrix}U_p \\U_f
		\end{bmatrix}                                & =0.
	\end{align*}
	Post-multiplying by the null space $\Xi_0\Pi_{\Xi_{0}^\bot}=0$, gives the linear dependent relation $[\eta^T\mathcal{C}_{b_p}\;\omega^T]U\Pi_{\Xi_0^\bot}=0$. From the assumption that the sequence of inputs is persistently exciting, the Hankel matrix of inputs $U$ has full row rank, even projected onto the orthogonal complement of the row space of lifted states. Implying that $\omega^T=0$, and $\eta^T\mathcal{C}_{b_p}=0$. For the latter, the controllability assumption implies that $\texttt{rank}(\mathcal{C}_{b_p})=b_p$, and that $\eta^T=0$ arriving at a contradiction.\hfill{} \qed
\end{pf}
The consequence of lemma~\ref{lem:lifted_state_inpu_sequence} is the identifiability of the lifted system. Nonetheless, the rank condition is sufficient but not necessary, we can show this with a particular case. The top of figure~\ref{fig:step_duff} shows the identification of the Duffing oscillator where the forcing signal is a step response, and the training set is a single trajectory whose initial condition is the furthest from the origin. Even though the rank of the Hankel matrix of inputs is 1, the pqSEDMD is able to capture the dynamics of the two attractors.
\begin{figure}[ht]
	\begin{center}
		\includegraphics[width=0.95\linewidth]{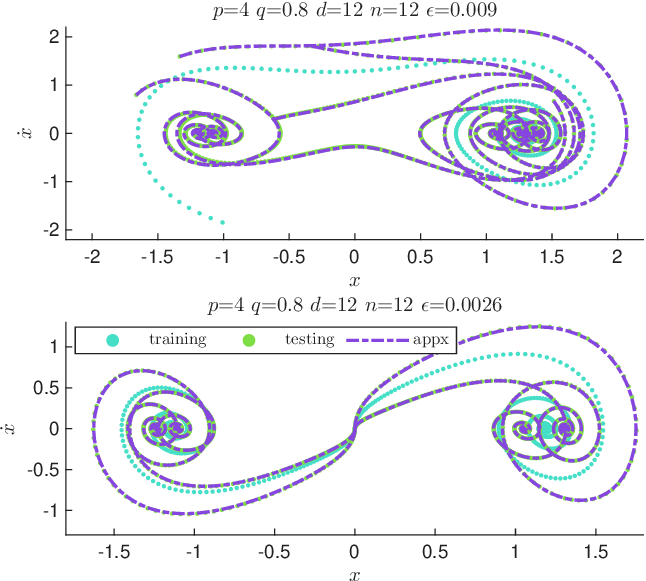}
	\end{center}
	\caption{Performance of the pqSEDMD algorithm for two different sets of experiments of the Duffing oscillator with a constant input where $\texttt{rank}(U)=1$.}\label{fig:step_duff}
\end{figure}

The bottom of figure~\ref{fig:step_duff} shows the result of performing the identification of the step response with experimental data whose initial conditions are the origin, and the forcing signal is still a constant function. This setup challenges the hypothesis that the nonzero initial conditions of the training set is what provides the necessary data for the identification. In this case, it is not enough to have a single trajectory to one of the attractors to capture the nonlinear dynamics of both. Instead, we can identify with one experiment per attractor, without having to select the outermost trajectory. Having a Hankel matrix of inputs that does not have a full row rank demands a modification of the solution regarding the projections.
\subsection{Projections of a lifted subspace}\label{sub:Projections of a lifted subspace} 
The pqSEDMD relies on the oblique projection~\eqref{eq:O_bf_stochastic}: a geometric operator projecting the row space of the \textit{future} lifted output Hankel matrix onto the row space of the instrumental variables, along the row space of the future input Hankel matrix. 

The restriction with the RQ decomposition and the efficient calculation of the projection is the necessity to work with matrices that are not rank deficient~\citep{Golub_2013}. Implementing those methods for the pqSEDMD algorithm and the rank deficient Hankel matrices of inputs produces numerically inaccurate results compared to the use of the formula for the projection relaxing the inverse with a Moore-Penrose pseudo inverse. Recall that we perform the projection because post multiplying $\Psi^{Y}_f$ with $\Pi_{U^\bot}$ eliminates the effect of the input from~\eqref{eq:fut_lift_data_eq}. Following~\cite{Viberg95}, the deduction of $\Pi_{U^\bot}$ comes from an approximation of the Markov parameter matrix $H_f$ via the least squares problem, $\min_{\hat{H}_f}{\Vert \Psi^Y_f-\hat{H}_fU_f \Vert}^{2}_{\text{F}}$. When the Hankel matrix is well conditioned, the solution is $\hat{H}_f=(\Psi^Y_f U_f^T)(U_f U_f^T)^{-1}$. Substituting the solution into the original least squares problem, 
\begin{subequations}
		\label{eq:lift_projection}
		\begin{align}
				\Psi^Y_f-\hat{H}_fU_f & = \Psi^Y\left(I - U_f^T\left(U_fU_f^T\right)^{-1}U_f\right)\label{eq:lift_projection_formula} \\
				& = \Psi^Y\Big/\mathbf{U}_f^{\bot},
		\end{align}
\end{subequations}
a result equivalent to formula~\eqref{eq:Oi_mat}, stating the necessity of a full row-rank Hankel matrix of inputs. We argue the use of the SVD of $U_f$ along with its effective rank to eliminate the effect of the input. Relaxes the necessity to have a full (row) rank Hankel matrix of inputs. For example, consider the identification of the duffing oscillator from six trajectories starting at the origin excited with a step input: two non-ideal conditions for the approximation of the system dynamics. And instead of testing with the same settings, consider a testing of non-zero initial conditions and a cosine forcing signal. The result is an accurate identification of the dynamics as depicted in figure~\ref{fig:step_cos_duff}.
\begin{figure}[ht]
		\begin{center}
				\includegraphics[width=0.95\linewidth]{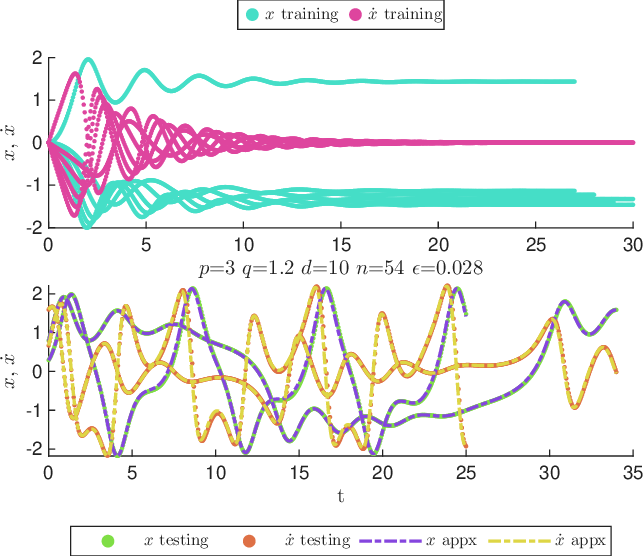}
		\end{center}
		\caption{Performance of the pqSEDMD algorithm trained with six trajectories from the origin excited with a step response, and tested with two trajectories from random initial conditions, forced with cosine functions.}\label{fig:step_cos_duff}
\end{figure}
\section{Conclusion}\label{sec:Conclusions} 
This paper introduces the pqSEDMD, an algorithm that combines the complementary strengths of two identification paradigms. Adapting the subspace identification methods to operate in a lifted function space achieves a robust approximation of nonlinear dynamics from noisy and limited data. Subspace methods are robust, able to handle process and measurement noise through Hankel matrix projections for the identification of linear dynamics; EDMD methods are broader, able to handle nonlinear systems through the use of observable functions for the identification of systems with low uncertainty. The pqSEDMD combines these advantages in a computationally stable algorithm for data-driven modeling of dynamical systems.
\bibliographystyle{ifac}        
\bibliography{siddec}           
\end{document}